



\documentclass[12pt]{amsart}
\usepackage{amssymb}
\usepackage{amsmath, amscd}
\usepackage{amsthm}
\usepackage[table,xcdraw]{xcolor}
\usepackage[colorlinks=true, linkcolor=blue,urlcolor=blue]{hyperref}

\newtheorem{theorem}{Theorem}[section]

\newtheorem{proposition}[theorem]{Proposition}
\newtheorem{lemma}[theorem]{Lemma}

\newtheorem{corollary}[theorem]{Corollary}
\theoremstyle{definition}

\newtheorem{example}[theorem]{Example}
\newtheorem{definition}[theorem]{Definition}

\newtheorem{remark}[theorem]{Remark}


\topmargin0cm \headheight0cm \headsep1cm \topskip0cm \textheight23cm \footskip1.8cm \textwidth15cm

\oddsidemargin0cm \evensidemargin0cm

\parindent15pt







\topmargin0cm \headheight0cm \headsep1cm \topskip0cm \textheight23cm \footskip1.8cm \textwidth15cm

\oddsidemargin0cm \evensidemargin0cm

\parindent15pt

\begin{document}

\author[Peter Danchev]{Peter Danchev}
\address{Institute of Mathematics and Informatics, Bulgarian Academy of Sciences, 1113 Sofia, Bulgaria}
\email{danchev@math.bas.bg; pvdanchev@yahoo.com}

\author[A. Javan]{Arash Javan}
\address{Department of Mathematics, Tarbiat Modares University, 14115-111 Tehran Jalal AleAhmad Nasr, Iran}
\email{a.darajavan@modares.ac.ir; a.darajavan@gmail.com}

\author[O. Hasanzadeh]{Omid Hasanzadeh}
\address{Department of Mathematics, Tarbiat Modares University, 14115-111 Tehran Jalal AleAhmad Nasr, Iran}
\email{hasanzadeomiid@gmail.com}

\author[A. Moussavi]{Ahmad Moussavi}
\address{Department of Mathematics, Tarbiat Modares University, 14115-111 Tehran Jalal AleAhmad Nasr, Iran}
\email{moussavi.a@modares.ac.ir; moussavi.a@gmail.com}

\title[Generalized Nil-Clean Rings]{Rings Whose Non-Invertible Elements are Nil-Clean}
\keywords{idempotent, nilpotent, unit, nil-clean ring}
\subjclass[2010]{16S34, 16U60}

\maketitle




\begin{abstract}
We systematically study those rings whose non-units are a sum of an idempotent and a nilpotent. Some crucial characteristic properties are completely described as well as some structural results for this class of rings are obtained. This work somewhat continues two publications on the subject due to Diesl (J. Algebra, 2013) and Karimi-Mansoub et al. (Contemp. Math., 2018).
\end{abstract}

\section{Introduction and Motivation}

Everywhere in the current paper, let $R$ be an associative but {\it not} necessarily commutative ring having identity element, usually denoted as $1$. Standardly, for such a ring $R$, the letters $U(R)$, $Nil(R)$ and $Id(R)$ are designed for the set of invertible elements (also termed as the unit group of $R$), the set of nilpotent elements and the set of idempotent elements in $R$, respectively. Likewise, $J(R)$ denotes the Jacobson radical of $R$, and $Z(R)$ denotes the center of $R$. The ring of $n \times n$ matrices over $R$ and the ring of $n \times n$ upper triangular matrices over $R$ are stand by $M_n(R)$ and $T_n(R)$, respectively. Traditionally, a ring is said to be {\it abelian} if each of its idempotents is central, that is, $Id(R) \subseteq Z(R)$.

Imitating \cite{nic}, an element $r$ in a ring $R$ is said to be {\it clean} if there is an idempotent $e \in R$ such that $r-e\in U(R)$, and a clean ring is defined as the ring in which every element is clean. On the other hand, mimicking \cite{diesl}, an element $r$ in a ring $R$ is said to be {\it nil-clean} if there is an idempotent $e \in R$ such that $r-e\in Nil(R)$, and a nil-clean ring is defined as the ring in which each element is nil-clean. In the case where $re=er$, we call these rings {\it strongly (nil-)clean}.

It is well know that there exists a nil-clean element that is {\it not} clean, but however all nil-clean rings are necessarily clean. The study of (strongly) (nil-)clean elements and rings has gained a significant attention in the past decade as evidenced by the existing articles \cite{BCDM,chenshib,DHM,DHJM,diesl,hannic,kosan1,sharp}.

In the same vein, continuing the cited above references, in \cite{K-M} were investigated those rings whose invertible elements are a sum of an idempotent and a nilpotent, i.e., $U(R)\subseteq Id(R)+Nil(R)$. This is a common generalization of the so-called {\it UU rings}, explored in detail in \cite{CUU} and \cite{DL}, that are rings with $U(R)=1+Nil(R)$.

Our aim, which motivates writing of this paper, is to examine what will happen in the dual case when non-units in rings are nil-clean elements, thus somewhat also expanding nil-clean rings in an other way. Recently, we studied 
in-depth the same situation, but related for strongly nil-clean rings (see \cite{DHJM}). 

So, we now arrive at our key instrument introduced as follows.

\begin{definition} We call a ring $R$ a {\it generalized nil-clean}, briefly abbreviated by {\it GNC}, provided $$R\backslash U(R)\subseteq Id(R) + Nil(R).$$
\end{definition}

Our further work is organized in the following two directions: In the next section, we achieve to exhibit some major properties and characterizations of GNC rings in various different aspects (see, for instance, Theorems~\ref{2-primal}, \ref{field}, \ref{semilocal} and \ref{morita}). In the subsequent section, we explore when a group ring is GNC under some restrictions on the former group and ring (see, e.g., Lemma~\ref{rg} and Theorem~\ref{gr}).

\section{Examples and Basic Properties of GNC Rings}

We begin here with some trivial, but useful assertions.

\begin{lemma}
Let $R$ be a ring and $a\in R$ be a nil-clean element. Then, $-a$ is clean.
\end{lemma}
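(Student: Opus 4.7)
The plan is to unpack the definition and produce an explicit clean decomposition of $-a$ by a simple algebraic rearrangement.

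First I would write the hypothesis in its canonical form: since $a$ is nil-clean, there exist $e \in Id(R)$ and $n \in Nil(R)$ with $a = e + n$. The goal is then to exhibit an idempotent $f$ and a unit $u$ such that $-a = f + u$.

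The natural candidate for the idempotent piece is the complementary idempotent $f := 1 - e$, which is automatically idempotent since $e$ is. Subtracting this from $-a$ gives
\[
-a - (1-e) \;=\; -(e+n) - 1 + e \;=\; -1 - n \;=\; -(1+n).
\]
Thus I would take $u := -(1+n)$. Since $n$ is nilpotent (and $1$ commutes with everything), $1+n$ is a unit of $R$, with inverse $\sum_{k\ge 0}(-n)^{k}$ (a finite sum). Consequently $u = -(1+n)$ is also a unit, and the decomposition
\[
-a \;=\; (1-e) \;+\; \bigl(-(1+n)\bigr)
\]
witnesses that $-a$ is clean.

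There is essentially no obstacle here; the only point to verify carefully is that $1+n$ is invertible, which is the standard fact that any unit plus nilpotent (when they commute) is a unit, applied to $1$ and $n$. No commutativity of $e$ and $n$ is required, and the argument works in an arbitrary associative ring with identity.
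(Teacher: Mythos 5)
Your decomposition $-a = (1-e) + \bigl(-(1+n)\bigr)$ is exactly the one the paper uses, and your verification that $1+n$ (hence $-(1+n)$) is a unit is correct and complete. This matches the paper's proof in both approach and detail.
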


\begin{proof}
Assume $a = e + q$ is a nil-clean representation. Thus, $-a = (1-e) - (1+q)$ is a clean representation.
\end{proof}

\begin{corollary}\label{clean}
Let $R$ be a GNC ring. Then, $R$ is clean.
\end{corollary}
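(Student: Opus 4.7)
The plan is short because the previous lemma does almost all of the work. The goal is to show that every $x \in R$ has a clean decomposition $x = e + u$ with $e \in Id(R)$ and $u \in U(R)$. I would split on whether $x$ is a unit.

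If $x \in U(R)$, the decomposition $x = 0 + x$ witnesses cleanness trivially, so this case needs no further argument.

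If $x \notin U(R)$, then $-x \notin U(R)$ as well (since $U(R)$ is closed under negation, because $(-x)(-x^{-1}) = xx^{-1} = 1$ would force $x$ to be a unit). By the GNC hypothesis applied to $-x$, there exist $e \in Id(R)$ and $q \in Nil(R)$ with $-x = e + q$, i.e., $-x$ is nil-clean. Now invoke the preceding lemma: since $-x$ is nil-clean, its negative $x = -(-x)$ is clean.

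The only thing to watch is the direction in which the preceding lemma is applied; it converts a nil-clean element into a clean one by negating, so I have to negate first (consider $-x$ rather than $x$ directly) and then apply the lemma to recover cleanness of $x$. There is no real obstacle, and no further machinery is required.
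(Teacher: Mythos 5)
Your proposal is correct and is precisely the argument the paper intends: units are trivially clean, and for a non-unit $x$ one applies the GNC hypothesis to $-x$ (also a non-unit) and then the preceding lemma to conclude $x=-(-x)$ is clean. The observation that one must negate first is exactly the right point of care, since applying GNC directly to $x$ would not obviously yield a clean decomposition in a noncommutative ring.
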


In other words, GNC rings lie between the nil-clean rings and clean rings. However, it is worthwhile noticing that $\mathbb{Z}_3$ is a GNC ring that is manifestly {\it not} nil-clean, while $\mathbb{Z}_6$ is a clean ring that is {\it not} GNC.

\begin{lemma}\label{J(R) nil}
Let $R$ be a GNC ring. Then, $J(R)$ is nil.
\end{lemma}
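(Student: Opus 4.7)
The plan is to show that every $j \in J(R)$ is nilpotent by first producing a nil-clean decomposition of $j$ and then killing both parts by reducing modulo $J(R)$.

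First I would observe that $J(R) \cap U(R) = \emptyset$ (in any nonzero ring), because a unit in $J(R)$ would force $1 \in J(R)$. So any $j \in J(R)$ is a non-unit, and the GNC hypothesis yields a decomposition $j = e + q$ with $e \in Id(R)$ and $q \in Nil(R)$. My goal is to force $e = 0$, which would give $j = q \in Nil(R)$.

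Next I would pass to the quotient $\bar R = R/J(R)$. There $\bar j = 0$, so $\bar e = -\bar q$. The image $\bar e$ is still an idempotent and the image $\bar q$ is still nilpotent (these properties are preserved by any ring homomorphism). Squaring the relation $\bar e = -\bar q$ yields $\bar e = \bar e^2 = \bar q^2$, so $\bar q^2 = -\bar q$, which iterates to $\bar q^{2k+1} = \bar q$ for every $k \ge 0$. But $\bar q$ is nilpotent, so taking $k$ large enough forces $\bar q = 0$, and therefore $\bar e = 0$ as well.

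Thus $e \in J(R)$. Since $e$ is idempotent and $1 - e$ is then a unit (because $e \in J(R)$), the identity $e(1-e) = 0$ multiplied by $(1-e)^{-1}$ gives $e = 0$. Hence $j = q \in Nil(R)$, proving $J(R) \subseteq Nil(R)$. The only subtle step is the little parity argument that forces $\bar q = 0$ from the combination of idempotency of $\bar e = -\bar q$ and nilpotency of $\bar q$; everything else is bookkeeping about the Jacobson radical.
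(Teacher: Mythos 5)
Your proof is correct, and it reaches the same pivotal intermediate goal as the paper --- forcing $e=0$ in the decomposition $j=e+q$ --- but by a different mechanism. The paper argues entirely inside $R$ in one line: $1-e=(1+q)-j$ is a unit plus an element of $J(R)$, hence a unit, and a unit idempotent must equal $1$, so $e=0$. You instead pass to $\bar R=R/J(R)$, where $\bar e=-\bar q$ is an idempotent equal to (the negative of) a nilpotent, conclude $\bar e=0$, and then lift back using the standard fact that $J(R)$ contains no nonzero idempotents. Both routes are sound; the paper's is shorter and self-contained, while yours isolates the reusable observation that an element which is simultaneously idempotent and nilpotent is zero. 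On that point, your parity computation can be compressed: from $\bar e=-\bar q$ you get that $\bar e$ itself is nilpotent, and then $\bar e=\bar e^{\,n}=0$ directly, with no need to iterate $\bar q^{2k+1}=\bar q$. Everything else (the disjointness $J(R)\cap U(R)=\emptyset$, the lifting step via $e(1-e)=0$ with $1-e$ invertible) is correctly handled.
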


\begin{proof}
Choose $j \in J(R)$. Since $j \notin U(R)$, we have $e = e^2 \in R$ and $q \in Nil(R)$ such that $j = e + q$. Therefore, $1-e = (q+1) - j \in U(R) + J(R) \subseteq U(R)$, so $e = 0$. Hence, $j = q \in Nil(R)$, as required.
\end{proof}

The next constructions are worthy of documentation.

\begin{example}
For any ring $R$, the polynomial ring $R[x]$, the Laurent polynomial ring $R[x,x^{-1}]$, and the formal power series ring $R[[x]]$ are all {\it not} GNC rings.
\end{example}

\begin{proof}
Assuming that $R[x,x^{-1}]$ is a GNC ring, then $1+x$ is not a unit. This is because, if $1+x$ is a unit, there exists a suitable power of x such that
$$(1+x)(a_0+a_1x+ \cdots + a_nx^n)=x^k,$$
for some $n > 1$ and $k$, where $a_0$ and $a_n$ are non-zero. The left-hand side contains the two distinct terms  $a_0$ and $a_n$ , contradicting the equality. Therefore, there exists $e=e^2 \in R[x,x^{-1}]$ and $q \in Nil(R[x,x^{-1}])$ such that $1+x=e+q$. Consequently, $1-e = x(1+x^{-1}q) \in U(R[x,x^{-1}])$, which means $1+x = q \in Nil(R[x,x^{-1}])$, that is the desired contradiction.

Supposing now that $R[[x]]$ is GNC ring, we know that $$J(R[[x]]) = \{a + xf(x) : a \in J(R) \text{ and } f(x) \in R[[x]]\}$$ (see, for example, \cite[Exercise 5.6]{lam fi}), and so it is clear that $x \in J(R[[x]])$. Therefore, $J(R[[x]])$ is not nil, a contradiction, thus getting the desired claim.

Next, if $R[x]$ is GNC, then it is clean in view of Lemma \ref{clean}. But then this contradicts \cite[Remark 2.8]{42}, and hence $R[x]$ cannot be GNC, as claimed.
\end{proof}

In regard to the above example, in the following theorem we attempt to classify when a ring is $2$-primal in terms of nil-clean elements, respectively in rings $R[x]$, $R[x, x^{-1}]$ and $R[[x]]$. Concretely, the following curious statement is true:

\begin{theorem}\label{2-primal}
Let $R$ be a ring and $NC(R)$ be the set of all nil-clean elements in the ring $R$. Then, we have:

(1) $R$ is a $2$-primal ring if, and only if, $NC(R[x]) = NC(R) + Nil_*(R)[x]x$.

(2) $R$ is a $2$-primal ring if, and only if, $NC(R[x, x^{-1}]) = Nil_{\ast}(R)[x, x^{-1}]x^{-1} + NC(R) + Nil_{\ast}(R)[x, x^{-1}]x $.

(3) $R$ is a $2$-primal ring if, and only if, $NC(R[[x]]) \subseteq NC(R) + Nil_*(R)[[x]]x$.
\end{theorem}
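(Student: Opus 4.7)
I would handle the three parts in parallel, leaning on the following classical ingredients: (a) $R$ is $2$-primal if and only if $Nil(R)=Nil_*(R)$, in which case $R/Nil_*(R)$ is reduced; (b) the lower nil radical commutes with each of the three constructions, i.e., $Nil_*(R[x])=Nil_*(R)[x]$ and analogously for $R[x,x^{-1}]$ and $R[[x]]$; (c) for any reduced ring $S$, each of $S[x]$, $S[x,x^{-1}]$ and $S[[x]]$ is again reduced and has all its idempotents in $S$---the key point is that idempotents are central in a reduced ring, after which the standard leading-coefficient (respectively lowest-term) argument kills every higher coefficient. Combining (a)--(c) and passing to $R/Nil_*(R)$, under the $2$-primal hypothesis one obtains $Nil(R[x])=Nil_*(R)[x]$, $Nil(R[x,x^{-1}])=Nil_*(R)[x,x^{-1}]$, $Nil(R[[x]])\subseteq Nil_*(R)[[x]]$, and every idempotent of any of these three rings agrees, modulo $Nil_*(R)$, with a constant idempotent of $R$.

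For the ``only if'' directions of (1)--(3), take $f\in NC(T)$ in the appropriate ring $T$ and write $f=e+q$ with $e^2=e\in T$ and $q\in Nil(T)$. By the previous paragraph, every coefficient of $q$ and every non-$x^0$ coefficient of $e$ lies in $Nil_*(R)$. Letting $f_0$ denote the $x^0$-coefficient of $f$, one has $f_0=e_0+q_0$ with $e_0\in Id(R)$ and $q_0\in Nil_*(R)\subseteq Nil(R)$, so $f_0\in NC(R)$; and the remainder $f-f_0$ falls into $Nil_*(R)[x]x$ in case (1), splits as the sum of a piece in $Nil_*(R)[x^{-1}]x^{-1}$ and a piece in $Nil_*(R)[x]x$ in case (2), and lies in $Nil_*(R)[[x]]x$ in case (3). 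This yields the claimed decomposition.

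For the ``if'' directions I would feed the hypothesis the nilpotent test elements $qx$ (and also $qx^{-1}$ in case (2)) for arbitrary $q\in Nil(R)$. Each is nilpotent in $T$, hence nil-clean, so the hypothesis provides a decomposition of the prescribed form. Comparing the $x^0$-coefficients forces the $NC(R)$-summand to vanish, after which comparing the $x$-coefficient (respectively the $x^{-1}$-coefficient) forces $q\in Nil_*(R)$. Since $q\in Nil(R)$ was arbitrary, $Nil(R)\subseteq Nil_*(R)$, and $R$ is $2$-primal.

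The main obstacle is the control of idempotents in the three rings: to exploit a nil-clean decomposition $f=e+q$ one must know that $e$ reduces, modulo $Nil_*(R)$, to a constant idempotent of $R$, and this relies critically on idempotents being central in the reduced quotient $R/Nil_*(R)$, which is exactly what the $2$-primal assumption delivers. A further subtlety in the power-series case (3) is that $Nil_*(R)[[x]]$ may contain non-nilpotent elements of $R[[x]]$, which is why only the inclusion (rather than equality, as in (1) and (2)) is asserted there.
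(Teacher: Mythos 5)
Your proposal is correct and follows essentially the same route as the paper: pass to the reduced quotient $R/Nil_*(R)$ to force the idempotent in a nil-clean decomposition to be constant modulo $Nil_*(R)$ and the nilpotent part to have all coefficients in $Nil_*(R)$, then for the converse feed in the test element $qx$ with $q\in Nil(R)$ and compare coefficients; the only cosmetic difference is that you re-prove the constancy of idempotents over a reduced ring directly, where the paper cites Kanwar--Leroy--Matczuk. One small omission: parts (1) and (2) assert an \emph{equality}, so you still need the easy containment $NC(R)+Nil_*(R)[x]x\subseteq NC(R[x])$ (write $a_0=e+q$ and absorb the higher-degree tail into the nilpotent part, using $Nil_*(R)[x]=Nil_*(R[x])\subseteq Nil(R[x])$), which your ``only if'' paragraph does not address but which causes no difficulty.
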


\begin{proof}
We need only to prove points (1) and (3), as the proof of (2) is similar to that of (1).

(1) Assume $R$ is a $2$-primal ring. If $f = \sum_{i=0}^{n} a_ix^i \in NC(R[x])$, then there exist $e = \sum_{i=0}^{n} e_ix^i \in Id(R[x])$ and $q = \sum_{i=0}^{n} q_ix^i \in Nil(R[x])$ such that $f = e + q$. Clearly, $e_0 \in Id(R)$ and $q_0 \in Nil(R)$. Since $R$ is $2$-primal, the quotient $R/Nil_*(R)$ is reduced. Therefore, according to \cite[Theorem 5]{kanwar}, we deduce $\overline{e} = \sum_{i=0}^{n} \overline{e}_ix^i = \overline{e}_0$, which implies that, for every $i \geq 1$, $e_i \in Nil_*(R)$. Moreover, since $R$ is $2$-primal, it follows that $R[x]$ is $2$-primal. Thus, one sees that $$q \in Nil(R[x]) = Nil_*(R[x]) = Nil_*(R)[x]$$ whence, for every $i \geq 0$, $q_i \in Nil_*(R)$. Since, for each $i \geq 0$, $a_i = e_i + q_i$, it follows that, for each $i \geq 1$, $a_i \in Nil_*(R)$. Now, if $f = \sum a_ix^i \in NC(R) + Nil_*(R)[x]x$, then we can write $a_0 = e + q$, where $e \in Id(R)$ and $q \in Nil(R)$. Therefore, $$f = e + (q + a_1x + \cdots + a_nx^n).$$ It, thereby, suffices to show that $$q + a_1x + \cdots + a_nx^n \in Nil(R[x]),$$ which is obvious, because $$q + a_1x + \cdots + a_nx^n \in Nil(R)[x] = Nil_*(R)[x] = Nil_*(R[x]) \subseteq Nil(R[x]).$$

Reciprocally, suppose $$NC(R[x]) = NC(R) + Nil_*(R)[x]x.$$ If $a \in Nil(R)$, then $ax \in Nil(R[x])$. Consequently, $ax$ is a nil-clean element in $R[x]$. By assuming $$ax = b_0 + b_1x + \cdots + b_nx^n \in NC(R) + Nil_*(R)[x]x,$$ we find that $a = b_1 \in Nil_*(R)$, which guarantees that $R$ is $2$-primal, as wanted.

\medskip

(3) {\bf Claim:} If $R$ is a $2$-primal ring, then $Nil(R[[x]]) \subseteq Nil_*(R)[[x]]$.

\medskip

Since $R$ is a $2$-primal ring, one verifies that $R/Nil_*(R)$ is reduced. Thus, $R/Nil_*(R)[[x]]$ must also be reduced. Moreover, since $$R[[x]]/Nil_*(R)[[x]] \cong R/Nil_*(R)[[x]],$$ it follows at once that $R[[x]]/Nil_*(R)[[x]]$ is reduced. Hence, $Nil(R[[x]]) \subseteq Nil_*(R)[[x]]$, as claimed.

The rest of the proof follows by arguing as in (1).
\end{proof}

Before proceed by proving the main theorem listed below, we need to establish two helpful assertions.

\begin{proposition}
Let $R$ be a GNC ring. Then, for every $n \in \mathbb{N}$, either $n \in Nil(R)$ or $n \in U(R)$.
\end{proposition}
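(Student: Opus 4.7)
The plan is to reduce, via a case analysis on a central idempotent constructed from $n$, to the two extreme cases that give the dichotomy directly. If $n \in U(R)$ we are done, so assume $n \notin U(R)$ and invoke the GNC hypothesis to write $n = e + q$ with $e^2 = e$ and $q \in Nil(R)$. Since $n \in Z(R)$, the element $e = n - q$ commutes with $q$, so binomial manipulations in $e$ and $q$ are legitimate. The crucial first computation is
$$n(n-1) = (e+q)^2 - (e+q) = q(2e + q - 1),$$
which exhibits $n(n-1)$ as the product of the nilpotent $q$ with a commuting factor; hence $n(n-1) \in Nil(R)$, and because $n$ and $n-1$ are central this gives $n^k(n-1)^k = 0$ for some $k \in \mathbb{N}$.

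Next I would apply B\'ezout: pick $\alpha, \beta \in \mathbb{Z}$ with $\alpha n^k + \beta(n-1)^k = 1$ and set $e' := \alpha n^k$. Then $e' \in Z(R)$, and using $n^k(n-1)^k = 0$ one computes $(e')^2 = e'\bigl(1 - \beta(n-1)^k\bigr) = e'$, so $e'$ is a central idempotent with complement $\beta(n-1)^k$. Now split on $e'$. If $e' = 1$ then $\alpha n^k = 1$, which makes $n$ a unit and contradicts our assumption. If $e' = 0$ then $\beta(n-1)^k = 1$, so $(n-1)^k$ is a unit; multiplying $n^k(n-1)^k = 0$ through by its inverse forces $n^k = 0$, i.e., $n \in Nil(R)$. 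The remaining case is when $e'$ is a nontrivial central idempotent, producing a nontrivial direct-product decomposition $R \cong R_1 \times R_2$ with both $R_i$ nonzero.

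In that last case I would first observe that each factor $R_i$ is not merely GNC but actually nil-clean: for $a \in R_1$, the element $(a,0)$ is a non-unit in $R$ (since $0$ is a non-unit in the nonzero ring $R_2$), so the GNC hypothesis yields a nil-clean decomposition of $(a,0)$ whose $R_1$-coordinate is a nil-clean decomposition of $a$. Hence $R$ itself is nil-clean, so by Diesl's structure theorem $R/J(R)$ is Boolean and $J(R)$ is nil. Therefore $\bar n \in \{0, 1\}$ in $R/J(R)$, which forces $n \in Nil(R)$ when $n$ is even and $n \in U(R)$ when $n$ is odd. I expect the main obstacle to be precisely this last case: it needs both the non-obvious fact that direct-product summands of a GNC ring are nil-clean and the invocation of Diesl's characterisation to pin down the behaviour of integers in $R$. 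The other two cases are essentially mechanical once the B\'ezout idempotent is in hand.
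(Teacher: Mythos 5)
Your proof is essentially correct but follows a genuinely different route from the paper's. The paper argues by induction on $n$: assuming the dichotomy for $k$ and supposing $k+1\notin U(R)$, it writes $k+1=e+q$, observes that $k$ cannot be nilpotent (else $k+1=1+k$ would be a unit), so $k\in U(R)$ by the induction hypothesis, and then $1-e=-k(1-k^{-1}q)$ is simultaneously an idempotent and a unit, forcing $e=0$ and $k+1=q\in Nil(R)$. Your argument avoids induction entirely: the identity $n(n-1)=q(2e+q-1)$ with commuting factors gives $n^k(n-1)^k=0$, and the B\'ezout idempotent $e'=\alpha n^k$ splits the problem into three cases, the only nontrivial one reducing (via the direct-factor argument, which is precisely the content of Proposition~\ref{factor}(3) proved later in the paper, so there is no circularity) to the nil-clean case. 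This is more structural and isolates the one genuinely interesting situation, namely when $R$ admits a nontrivial central idempotent; the paper's induction is shorter but hides this.

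One step needs repair. In the final case you invoke ``Diesl's structure theorem'' to conclude that $R/J(R)$ is Boolean for a nil-clean ring $R$. That is false: Boolean-ness of $R/J(R)$ together with $J(R)$ nil characterizes \emph{strongly} nil-clean rings, and $M_2(\mathbb{Z}_2)$ is a nil-clean ring with zero Jacobson radical that is not Boolean. The conclusion you draw from it survives, however, under the correct citation: Diesl shows that $2\in Nil(R)$ in every nil-clean ring (Proposition 3.14 of his paper, which is in fact quoted elsewhere in the present article), and since $2$ is central this makes every even integer nilpotent and every odd integer a unit of the form $1+\text{nilpotent}$. With that one substitution your proof is complete.
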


\begin{proof}
By using induction on $n$, we shall prove the statement for $n$. In fact, if $n=1$, the proof is evident. Assume, for $k < n$, we have either $k \in Nil(R)$ or $k \in U(R)$. If $k+1 \in U(R)$, there is nothing left to prove. So, assume $k+1 \notin U(R)$. Thus, there exist $e \in Id(R)$ and $q \in Nil(R)$ such that $k+1 = e + q$. Since $k+1 \notin U(R)$, we must have $k \notin Nil(R)$, so by the induction hypothesis it must be that $k \in U(R)$. Therefore, we infer $1-e = -k(1-k^{-1}q) \in U(R)$, so $e=0$, which forces $k+1 = q \in Nil(R)$, as expected.
\end{proof}

\begin{lemma}\label{commut}
Let $R$ be a GNC ring with $2 \in U(R)$ and, for every $u \in U(R)$, we have $u^2 = 1$. Then, $R$ is a commutative ring.
\end{lemma}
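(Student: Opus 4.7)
The plan is to show first that the hypothesis forces every nilpotent to vanish, so that every non-unit is actually an idempotent, and then to exploit commutativity of idempotents (via reducedness) and the Jacobson-style trick for elements of exponent $2$ in the group of units.

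First I would show that $Nil(R)=0$. Given $q\in Nil(R)$, the element $1+q$ is a unit, so by hypothesis $(1+q)^2=1$, which expands to $2q+q^2=0$, i.e.\ $q(2+q)=0$. Because $2\in U(R)$ is central, $2^{-1}$ is also central, and since $2^{-1}q$ is nilpotent, $1+2^{-1}q\in U(R)$; thus $2+q=2(1+2^{-1}q)\in U(R)$. Cancelling the unit $2+q$ on the right of $q(2+q)=0$ yields $q=0$. Combined with the GNC hypothesis this gives that every non-unit of $R$ is of the form $e+0=e$, i.e.\ an idempotent; so $R=U(R)\cup Id(R)$.

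Next, since $R$ is reduced, every idempotent is central by the standard argument: for $e\in Id(R)$ and $r\in R$, the element $x=er(1-e)$ satisfies $x^2=0$, hence $x=0$; similarly $(1-e)re=0$, giving $er=ere=re$. Thus $Id(R)\subseteq Z(R)$, and in particular any idempotent commutes with every element of $R$.

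It remains to check that any two units commute. If $u,v\in U(R)$, then by assumption $u^2=v^2=(uv)^2=1$, so $uv\cdot uv=1$ and therefore
\[
uv=(uv)^{-1}=v^{-1}u^{-1}=vu,
\]
using $u^{-1}=u$ and $v^{-1}=v$. Combining this with the centrality of idempotents covers the three possible cases (both idempotent, one of each, both units) and yields commutativity of $R$. The only place where any real content appears is the killing of nilpotents in the first step; the rest is formal once $Nil(R)=0$ and $U(R)$ is of exponent $2$ are in hand.
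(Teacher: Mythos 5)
Your proof is correct, but it takes a genuinely different route from the paper's. The paper keeps the nilpotent parts of the nil-clean decompositions throughout: it first shows units commute with one another (the same $uv=(uv)^{-1}=vu$ computation you use), deduces that idempotents are central by commuting the units $2e-1$ and $1+er(1-e)$ and dividing by $2$, notes that nilpotents commute because $1+\mathrm{Nil}(R)\subseteq U(R)$, and then runs a four-case analysis on whether $x$ and $y$ are units, multiplying out $(e+q)(f+p)$ term by term. You instead observe at the outset that the hypotheses force $\mathrm{Nil}(R)=0$ --- your computation $q(2+q)=0$ with $2+q=2(1+2^{-1}q)\in U(R)$ is sound, and is essentially the same reduction the paper only performs later, in the proof of Theorem~\ref{field} (there via $(1-q)^2=(1+q)^2$ giving $4q=0$). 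Once $R$ is reduced, every non-unit is an idempotent, idempotents are central by the standard reducedness argument (with no appeal to the GNC hypothesis or to $2\in U(R)$ at that step), and the case analysis collapses to two lines. Your argument is shorter and more transparent, and it effectively anticipates the paper's own observation that Lemma~\ref{commut} is subsumed by the stronger Theorem~\ref{field}; the only thing the paper's longer route buys is a template that would still function in a setting where one cannot kill the nilpotents but can still show units, idempotents and nilpotents pairwise commute.
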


\begin{proof}
For any $u, v \in U(R)$, we have $u^2 = v^2 = (uv)^2 = 1$. Therefore, $uv = (uv)^{-1} = v^{-1}u^{-1} = vu$. Hence, the units commute with each other.

Now, we will show that $R$ is abelian. Indeed, for every $e \in \text{Id}(R)$ and $r \in R$, we know $2e-1\in U(R)$ and $(1+er(1-e)) \in U(R)$. Since the units commute with each other, we have $2er(1-e) = 2(1-e)re = 0$. Since $2 \in U(R)$, we derive $er(1-e) = (1-e)re = 0$, which insures $er = ere = re$. Therefore, $R$ is abelian, as pursued.

On the other side, since $1 + \text{Nil}(R) \subseteq U(R)$ and the units commute with each other as showed above, the nilpotent elements also commute with each other.

Furthermore, we will demonstrate that $R$ is commutative. To this goal, let $x, y \in R$. We distinguish the following four cases:
\begin{enumerate}
\item $x, y \in U(R)$: since the units commute with each other, it is clear that $xy = yx$.
\item $x, y \notin U(R)$: since $R$ is a GNC ring, there exist $e, f \in \text{Id}(R)$ and $p, q \in \text{Nil}(R)$ such that $x = e + q$ and $y = f + p$. Thus, we extract
    \[
    xy = (e + q)(f + p) = ef + ep + qf + qp = fe + pe + fq + pq = (f + p)(e + q) = yx.
    \]
\item $x \in U(R)$ and $y \notin U(R)$: in this case, there exists $e \in \text{Id}(R)$ and $q \in \text{Nil}(R)$ such that $y = e + q$. Since $x(1 + q) = (1 + q)x$, we have $qx = xq$. Therefore, we obtain
    \[
    xy = x(e + q) = xe + xq = ex + qx = (e + q)x = yx.
    \]
\item $x \notin U(R)$ and $y \in U(R)$: analogously to case (3), we can show that $xy = yx$.
\end{enumerate}

Consequently, bearing in mind all of the presented above, $R$ is a commutative ring, as asked for.
\end{proof}

\begin{remark} In the lemma above, the condition $2\in U(R)$ is necessary and cannot be omitted. This is because, assuming $R=T_2(\mathbb{Z}_2)$, we have that $R$ is a GNC ring and, for every $u \in R$, $u^2=1$, but $R$ is obviously {\it not} commutative. If, however, we do {\it not} consider the condition $2 \in U(R)$, equipped with Lemma \ref{NR} situated below, we can show that either $R$ is a commutative local ring or $R$ is a strongly nil-clean ring. But, if we examine the assumptions of Lemma \ref{commut} more exactly, we can readily show that $R$ is a field, as stated in the proof of the next principal theorem.
\end{remark}

We now have at hand all the machinery necessary to show truthfulness of the following statement which considerably extends Lemma~\ref{commut} from commutative rings to fields. We, however, will give a more conceptual and transparent proof without its direct usage.

\begin{theorem}\label{field}
Let $R$ be a GNC ring with $2 \in U(R)$ and, for each $u \in U(R)$, we have $u^2 = 1$. Then, $R$ is a field.
\end{theorem}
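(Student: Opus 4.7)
The plan is to show in succession that $R$ is reduced, that every non-unit of $R$ is therefore an idempotent, and finally that $R$ has no nontrivial idempotents, from which the field conclusion follows. To see $Nil(R) = 0$, take $q \in Nil(R)$; then $1 + q \in U(R)$, so the hypothesis $(1+q)^2 = 1$ collapses to $q(2+q) = 0$. Since $2 \in U(R)$ and $q$ is nilpotent, $2+q$ is a unit, forcing $q = 0$. With $R$ reduced, the GNC hypothesis immediately gives that every $a \in R \setminus U(R)$ is an idempotent: writing $a = e + q$ with $e^2 = e$ and $q \in Nil(R)$ leaves $a = e$.

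Next, using the commutativity of $R$ granted by Lemma~\ref{commut}, I would rule out nontrivial idempotents via a Boolean-ring argument. Suppose for contradiction that $e \in Id(R)$ with $e \ne 0, 1$. For any $r \in R$, the product $er$ is annihilated by $1 - e \ne 0$, so $er$ is a zero-divisor and hence a non-unit, and therefore an idempotent by the previous step. Expanding $(er)^2 = er$ gives $e(r^2 - r) = 0$, and the parallel argument applied to $1-e$ gives $(1-e)(r^2 - r) = 0$. Adding these equalities yields $r^2 = r$ for every $r \in R$, i.e., $R$ is a Boolean ring. But every Boolean ring has characteristic $2$, contradicting $2 \in U(R)$.

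Hence $Id(R) = \{0, 1\}$. Combined with the fact that every non-unit is an idempotent and $1 \in U(R)$, this forces every non-unit of $R$ to be $0$. Thus $R \setminus \{0\} = U(R)$, and since $R$ is commutative, $R$ is a field. I expect the main obstacle to be the elimination of nontrivial idempotents: the critical observation is that reducedness upgrades GNC to the statement that every non-unit is an idempotent, after which pairing $er$ with $(1-e)r$ forces the Boolean identity $r^2 = r$ globally and contradicts $2 \in U(R)$.
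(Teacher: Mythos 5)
Your proof is correct, and its skeleton (kill the nilpotents, kill the nontrivial idempotents, conclude every non-unit is $0$) matches the paper's; but the two middle steps are carried out quite differently. For reducedness you both use $u^2=1$ on $1+q$, though the paper compares $(1-q)^2=(1+q)^2$ to get $4q=0$ directly, while you factor $q(2+q)=0$ and invert $2+q$ --- equally valid since $2$ is central. The real divergence is in eliminating idempotents: the paper observes that for $e\neq 0,1$ the element $2e$ is a non-unit, hence (being nil-free) an idempotent, so $4e=2e$ and $e=0$; this is a two-line argument that needs no commutativity. You instead invoke Lemma~\ref{commut} up front and run a Boolean-ring argument on $er$ and $(1-e)r$, concluding $r^2=r$ globally and contradicting $2\in U(R)$. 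This works (zero-divisors are non-units in a commutative ring, so both products are idempotents and the two relations $e(r^2-r)=0$ and $(1-e)(r^2-r)=0$ sum to $r^2=r$), but it is heavier than needed, and it leans on Lemma~\ref{commut}, which the paper explicitly set out to avoid using; in exchange, your route gets commutativity for free at the end, whereas the paper must finish with Jacobson's $a^3=a$ theorem to upgrade its division ring to a field. Net effect: your argument is self-contained modulo Lemma~\ref{commut} and avoids Jacobson's theorem; the paper's is independent of Lemma~\ref{commut} (hence genuinely reproves the commutativity) at the cost of citing a deeper classical result.
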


\begin{proof}
Firstly, we show that the ring $R$ does {\it not} have non-trivial idempotent and nilpotent elements.

To this purpose, assume $q \in \text{Nil}(R)$. Then, $(1 \pm q) \in U(R)$, so that $$1-2q +q^2=(1-q)^2=1=(1+q)^2=1+2q+q^2.$$ Therefore, $4q=0$. Since $2 \in U(R)$, we conclude that $q=0$.

Now, we show that $R$ does {\it not} have non-trivial idempotents. In fact, if $0,1 \neq e \in \text{Id}(R)$, then $2e \notin U(R)$. If, for a moment, $2e \in U(R)$, since $2 \in U(R)$, we get $e\in Id(R) \cap U(R)={1}$, leading to a contradiction. Thus, $2e \notin U(R)$. But since $R$ is a GNC ring and as observed above $\text{Nil}(R)=\{0\}$, we detect $2e=f \in \text{Id}(R)$. So, $4e=2e$, which assures $2e=0$. Again, since $2 \in U(R)$, we can get $e=0$, which is a contradiction. Hence, $R$ does {\it not} have non-trivial idempotents, as asserted.

Now, we illustrate that $R$ is a division ring. To this target, since by what we have seen above $R$ does not have non-trivial nilpotent and idempotent elements, for any $x \notin U(R)$, we must have either $x=0$ or $x=1$. Since $x \notin U(R)$, it must be that $x=0$. Therefore, $R$ is a division ring, as claimed.

Furthermore, since for any $u \in U(R)$ we have $u^2=1$, and $R$ is a division ring, this enables us that $a^3=a$ for every $a \in R$. Hence, the classical Jacobson's theorem allows us to conclude that $R$ is a commutative ring, as promised.
\end{proof}

As a consequence, we yield:

\begin{corollary}\label{sum nil}
Let $R$ be a GNC ring. Then, $Nil(R)+J(R)=Nil(R)$.
\end{corollary}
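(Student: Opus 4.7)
The plan is to show that the nontrivial containment $Nil(R)+J(R)\subseteq Nil(R)$ is a direct consequence of Lemma~\ref{J(R) nil}, whereas the reverse $Nil(R)\subseteq Nil(R)+J(R)$ is immediate by taking $j=0$. So the whole task reduces to verifying that whenever $q\in Nil(R)$ and $j\in J(R)$, the element $q+j$ is again nilpotent.

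First, I would fix $n\in\mathbb{N}$ with $q^n=0$ and expand $(q+j)^n$ as a sum of $2^n$ noncommutative monomials in $q$ and $j$. Exactly one of these monomials involves no occurrence of $j$, namely $q^n$, and that one is zero by choice of $n$. Every remaining monomial contains at least one factor of $j$, and since $J(R)$ is a two-sided ideal, each such monomial lies in $J(R)$. Consequently $(q+j)^n\in J(R)$.

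Now I would invoke Lemma~\ref{J(R) nil}: because $R$ is GNC, $J(R)$ is nil, so the element $(q+j)^n\in J(R)$ is itself nilpotent. Thus there exists $m\in\mathbb{N}$ with $\bigl((q+j)^n\bigr)^m=0$, i.e.\ $(q+j)^{nm}=0$, which gives $q+j\in Nil(R)$. Combining this with the trivial reverse inclusion yields the desired equality.

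I do not foresee a real obstacle here; the argument uses only that $J(R)$ is a nil ideal, a property we have already secured in Lemma~\ref{J(R) nil}. The only thing worth remarking in the write-up is that the GNC hypothesis enters solely through that lemma, and the same reasoning would work verbatim in any ring whose Jacobson radical is nil.
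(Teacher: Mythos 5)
Your argument is correct and is essentially the paper's own proof: expand $(q+j)^n$ with $q^n=0$, note the remaining terms lie in the ideal $J(R)$, and then use Lemma~\ref{J(R) nil} to conclude $(q+j)^n$ (hence $q+j$) is nilpotent. You merely spell out the final step, that nilpotency of a power gives nilpotency of the element, a little more explicitly than the paper does.
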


\begin{proof}
Let us assume that $a \in Nil(R)$ and $b \in J(R)$. Thus, there exists an $n \in \mathbb{N}$ such that $a^n = 0$. Therefore, $(a+b)^n = a^n + j$, where $j \in J(R)$. Employing Lemma \ref{J(R) nil}, we arrive at $(a+b)^n = j \in Nil(R)$, as required.
\end{proof}

Our next reduction criterion is this one:

\begin{proposition}\label{factor}
(1) For any nil-ideal $I \subseteq R$, $R$ is GNC if, and only if, $R/I$ is GNC.

(2) A ring $R$ is GNC if, and only if, $J(R)$ is nil and $R/J(R)$ is GNC.

(3) The direct product $\prod_{i=1}^{n} R_i$ is GNC for $n \ge 2$ if, and only if, each $R_i$ is nil-clean.
\end{proposition}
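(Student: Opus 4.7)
The plan is to prove (1) first, since (2) falls out by combining it with Lemma~\ref{J(R) nil}, and (3) needs a separate but short argument. The overarching theme for (1) is the standard ``lifting modulo a nil ideal'' principle: both units and idempotents lift, and any element whose image in $R/I$ is nilpotent must itself be nilpotent when $I$ is nil.

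For the forward direction of (1), I would pick $\bar r \in R/I$ outside $U(R/I)$, observe that the preimage $r$ cannot lie in $U(R)$ (otherwise $\bar r$ would already be a unit in $R/I$), apply the GNC hypothesis to write $r = e + q$ with $e \in Id(R)$ and $q \in Nil(R)$, and then project to obtain $\bar r = \bar e + \bar q$ in $R/I$. For the converse, I would take $r \in R \setminus U(R)$; since units lift modulo the nil ideal $I$, $\bar r \notin U(R/I)$, so there is a decomposition $\bar r = \bar f + \bar{q'}$ in $R/I$. Lift $\bar f$ to an idempotent $e \in R$ (possible because $I$ is nil), set $q := r - e$, and check that $q$ is nilpotent: its image in $R/I$ equals $\bar{q'}$ which is nilpotent, hence $q^m \in I$ for some $m$, and since $I$ is nil this forces $q$ itself to be nilpotent in $R$.

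Part (2) is then immediate: by Lemma~\ref{J(R) nil}, $J(R)$ is nil whenever $R$ is GNC, so (1) applied with $I = J(R)$ supplies both directions. For part (3), the ``if'' direction is the easy observation that a finite direct product of nil-clean rings is nil-clean by coordinate-wise decomposition, and hence GNC by Corollary~\ref{clean}'s nil-clean analogue (or simply by definition). For the ``only if'' direction, I would fix any index $j$ and any $a \in R_j$, then build an element $x \in \prod_{i=1}^{n} R_i$ whose $j$-th coordinate is $a$ and whose $k$-th coordinate is $0$ for some chosen $k \neq j$ (possible precisely because $n \geq 2$); a zero coordinate kills unit status (the degenerate case $R_k = 0$ being harmless), so $x$ is non-invertible, hence nil-clean, and projecting the nil-clean decomposition onto the $j$-th slot exhibits $a$ as a sum of an idempotent and a nilpotent in $R_j$. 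The only delicate step throughout is the converse direction of (1), which invokes two separate lifting facts (units and idempotents) plus the short nilness check on $q$; everything else is routine bookkeeping.
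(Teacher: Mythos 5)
Your proposal is correct and follows essentially the same route as the paper: project the decomposition for the forward direction of (1), lift idempotents modulo the nil ideal for the converse, deduce (2) from Lemma~\ref{J(R) nil}, and use a coordinate with a zero entry to force non-invertibility in (3). The only cosmetic difference is in the converse of (1), where you define the nilpotent part directly as $q := r - e$ and check $q^m \in I$, whereas the paper lifts $\bar q$ separately and absorbs the resulting error term from $I$ into the nilpotent summand; both amount to the same nilness check.
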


\begin{proof}
(1) We assume that $\overline{R} = R/I$ and $\bar{a} \notin U(\overline{R})$. Then, $a \notin U(R)$, so there exist $e \in Id(R)$ and $q \in Nil(R)$ such that $a = e + q$. Thus, $\bar{a} = \bar{e} + \bar{q}$.

Conversely, let us assume that $\overline{R}$ is a GNC ring. We also assume that $a \notin U(R)$, so $\bar{a} \notin U(\overline{R})$, hence $\bar{a} = \bar{e} + \bar{q}$, where $\bar{e} \in Id(\overline{R})$ and $\bar{q} \in Nil(\overline{R})$. Since $I$ is a nil-ideal, we can assume that $e \in Id(R)$ and $q \in Nil(R)$. Therefore, $a - (e + q) \in I \subseteq J(R)$, so that there exists $j \in J(R)$ such that $a = e + (q + j)$. Hence, in virtue of Corollary \ref{sum nil}, $a$ has a nil-clean representation, as needed.

(2) Utilizing Lemma \ref{J(R) nil} and part (1), the conclusion is apparent.

(3) Letting each $R_i$ be nil-clean, then $\prod_{i=1}^{n} R_i$ is nil-clean applying \cite[Proposition 3.13]{diesl}. Hence, $\prod_{i=1}^{n} R_i$ is GNC.

Oppositely, assume that $\prod_{i=1}^{n} R_i$ is GNC and that $R_j$ is not nil-clean for some index $1 \le j \le n$. Then, there exists $a \in R_j$ which is not nil-clean. Consequently, $(0, \cdots  , 0, a, 0, \cdots , 0)$ is not a nil-clean element in $\prod_{i=1}^{n} R_i$. But, it is plainly seen that $(0, \cdots  , 0, a, 0, \cdots , 0)$ is not invertible in $\prod_{i=1}^{n} R_i$ and thus, by hypothesis, $(0, \cdots  , 0, a, 0, \cdots , 0)$ is in turn not nil-clean, a contradiction. Therefore, each $R_i$ is nil-clean, as formulated.
\end{proof}

As a consequence, we derive:

\begin{corollary}\label{image}
Every homomorphic image of a GNC ring is again GNC.
\end{corollary}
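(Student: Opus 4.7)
The plan is to observe that a homomorphic image of $R$ has the form $R/I$ for some ideal $I$, and to verify the GNC condition element by element through the canonical projection $\pi: R \to R/I$. So take $\bar a \in R/I$ with $\bar a \notin U(R/I)$, pick any lift $a \in R$ with $\pi(a) = \bar a$, and try to produce a nil-clean decomposition of $\bar a$ from one of $a$.

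The key pre-observation is that $\pi$ sends units to units, so its contrapositive gives: if $\bar a \notin U(R/I)$, then $a \notin U(R)$. Indeed, were $a$ invertible in $R$ with inverse $b$, then $\pi(a)\pi(b) = \pi(b)\pi(a) = \bar 1$ would put $\bar a$ in $U(R/I)$. Once $a \notin U(R)$ is established, the GNC hypothesis on $R$ yields $e \in Id(R)$ and $q \in Nil(R)$ with $a = e + q$. Applying $\pi$ gives $\bar a = \bar e + \bar q$, and since ring homomorphisms preserve both idempotence and nilpotence, $\bar e \in Id(R/I)$ and $\bar q \in Nil(R/I)$. This shows $\bar a$ is nil-clean in $R/I$, as required.

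There is essentially no obstacle: the only step that needs articulation is the ``lift first, then project'' move, together with the clean separation into the case $a \notin U(R)$ (which is the only case that arises). Note that, unlike Proposition~\ref{factor}(1), we do not need $I$ to be nil here, because we never have to reverse the implication or pull a nil-clean decomposition back from $R/I$ to $R$.
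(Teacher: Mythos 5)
Your proof is correct and follows essentially the same route as the paper: the paper's own (very terse) argument rests on exactly the observation you make, namely that $a+I\notin U(R/I)$ forces $a\notin U(R)$, after which one projects a nil-clean decomposition of $a$ down to $R/I$. You have merely written out the details the paper leaves implicit, including the correct remark that no nilness hypothesis on $I$ is needed in this direction.
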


\begin{proof}
Let us assume that \( I \) is an ideal of \( R \). We consider \( \overline{R} = R/I \). Clearly, for every \( a + I \notin U(\overline{R}) \), we have \( a \notin U(R) \), as expected.
\end{proof}

Assuming that \(L_n(R) = \left\{
\begin{pmatrix}
    0 & \cdots & 0 & a_1 \\ 0 & \cdots & 0 & a_2 \\ \vdots & \ddots & \vdots & \vdots \\ 0 & \cdots & 0 & a_n
\end{pmatrix}
\in T_n(R) : a_i \in R \right \} \subseteq T_n(R)\) and \(S_n(R) = \{(a_{ij}) \in T_n(R) : a_{11}= \cdots = a_{nn}\} \subseteq T_n(R)\), it is not so hard to check that the mapping $\varphi: S_n(R) \to S_{n-1}(R) \propto  L_{n-1}(R)$, defined as
$$\varphi \left(
\begin{pmatrix}
    a_{11} & a_{12} & \cdots & a_{1n}\\
    0 & a_{11} & \cdots & a_{2n}\\
    \vdots  & \vdots & \ddots  & \vdots \\
    0 & 0 & \cdots & a_{11}
\end{pmatrix}
\right) =
\begin{pmatrix}
    a_{11} & a_{12} & \cdots & a_{1,n-1} & 0 & \cdots & 0 & a_{1n} \\
    0 & a_{11} & \cdots & a_{2,n-1} & 0 & \cdots & 0 & a_{2n} \\
    \vdots  & \vdots  & \ddots  & \vdots  & \vdots  & \ddots  & \vdots  & \vdots  \\
    0 & 0 & \cdots & a_{11} & 0 & \cdots & 0 & a_{n-1,n} \\
    0 & 0 & \cdots & 0 & a_{11} & a_{12} & \cdots & a_{1,n-1} \\
    0 & 0 & \cdots & 0 & 0 & a_{11} & \cdots & a_{2,n-1} \\
    \vdots & \vdots & \ddots  & \vdots & \vdots  & \vdots  & \ddots  & \vdots \\
    0 & 0 & \cdots & 0 & 0 & 0 & \cdots & a_{11} \\
\end{pmatrix},$$
means $S_n(R) \cong S_{n-1}(R) \propto  L_{n-1}(R)$. Note that this isomorphism provides a suitable tool to study the ring $S_n(R)$, because by examining the trivial extension and using induction on $n$, we can extend the result to $S_n(R)$. Specifically, we are able to establish validity of the following.

\begin{corollary}
Let $R$ be ring, and $M$ a bi-module over $R$. Then the following items hold:

(1) The trivial extension $R \propto  M$ is a GNC ring if, and only if, $R$ is a GNC ring.

(2) For $n \ge 2$, $S_n(R)$ is GNC ring if, and only if, $R$ is a GNC.

(3) For $n \ge 2$, $R_n:=R[x]/(x^n)$ is GNC ring if, and only if, $R$ is a GNC.

(4) For $n,m \ge 2$, $A_{n,m}(R):=R[x,y \mid x^n=yx=y^m=0]$ is GNC ring if, and only if, $R$ is a GNC.

(5) For $n,m \ge 2$, $B_{n,m}(R):=R[x,y \mid x^n=y^m=0]$ is GNC ring if, and only if, $R$ is a GNC.

\end{corollary}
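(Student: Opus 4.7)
The plan is to reduce each of the five biconditionals to Proposition~\ref{factor}(1) by exhibiting, in each construction, a nil two-sided ideal whose quotient is isomorphic to $R$. Once such an ideal is identified, the GNC property passes in both directions along the projection, and the desired equivalence is immediate.

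For (1), I would take $I=\{(0,m):m\in M\}$ inside $R\propto M$; the multiplication of the trivial extension gives $I^{2}=0$, so $I$ is nil, while $(R\propto M)/I\cong R$. For (2), I would induct on $n\ge 2$ using the isomorphism $S_n(R)\cong S_{n-1}(R)\propto L_{n-1}(R)$ recorded in the excerpt: the base case $n=2$ reduces to $S_{2}(R)\cong R\propto R$ and is handled by~(1), and the inductive step follows by applying~(1) to the bimodule $L_{n-1}(R)$ over $S_{n-1}(R)$ and then invoking the induction hypothesis. For (3), the principal ideal $(\bar x)$ in $R_n=R[x]/(x^n)$ satisfies $(\bar x)^{n}=0$, is therefore nil, and has quotient $R_n/(\bar x)\cong R$, so Proposition~\ref{factor}(1) closes the case at once.

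For (4) and (5), I would take $I$ to be the two-sided ideal generated by the images of $x$ and $y$; in both presentations the quotient by $I$ is $R$, so the entire work is in checking that $I$ is nil. In~(4), the extra relation $yx=0$ reduces every monomial to the normal form $x^{a}y^{b}$ with $a<n$ and $b<m$, so there are only finitely many nonzero monomials in $I$ and $I$ is nilpotent. In~(5), a sufficiently long monomial in $x,y$ necessarily contains a factor $x^n$ or $y^m$ and is therefore zero, so $I$ is again nilpotent. In each case Proposition~\ref{factor}(1) then gives the biconditional. The main obstacle, and the step I would be most careful about, is the nilpotence count in~(5): verifying that any word in $x,y$ of length at least $n+m-1$ must contain a consecutive block $x^n$ or $y^m$ is a pigeonhole argument that is cleanest when $B_{n,m}(R)$ is read as the commutative factor $R[x,y]/(x^n,y^m)$, where degree considerations make the bound transparent. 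Once $I$ is shown to be nil in all five settings, the reduction to Proposition~\ref{factor}(1) is uniform, and the proof is complete.
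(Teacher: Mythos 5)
Your proposal is correct and follows essentially the same route as the paper: in each case one exhibits a nilpotent ideal with quotient isomorphic to $R$ and invokes Proposition~\ref{factor}(1), and your inductive treatment of $S_n(R)$ via $S_n(R)\cong S_{n-1}(R)\propto L_{n-1}(R)$ is precisely the paper's second method for part (2). Your remark that the nilpotence of the augmentation-type ideal in (5) requires reading $B_{n,m}(R)$ as the commutative quotient (otherwise $(xy)^k\neq 0$ for all $k$) is a sensible precaution that the paper leaves implicit.
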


\begin{proof}
(1) We take $I=0 \propto M$, so clearly $I$ is a nilpotent ideal of $R\propto M$ and $\frac{R\propto M}{I} \cong R$, hence the proof is complete exploiting Theorem \ref{factor}(1).

(2) {\bf Method 1:} We assume $I=\{(a_{ij}) \in S_n(R) : a_{11}=0\}$, so evidently $I$ is a nilpotent ideal of $S_n(R)$ and $S_n(R)/I \cong R$.

\noindent{\bf Method 2:} We shall prove the problem by induction on $n$. Assuming $n=2$, then $S_2(R)=R\propto R$, so the proof is straightforward thanks to (1). Now, assuming the problem holds for $k<n$, and since $S_{k+1}(R) \cong S_k(R)\propto L_k(R)$, we again deduce that $S_{k+1}(R)$ is a GNC ring in view of (1).

(3) We assume $$I=\{\sum_{i=0}^{n-1} a_{i}x^i \in R_{n} : a_0=0\},$$ so obviously $I$ is a nilpotent ideal of $R_{n}$ and we infer $R_{n}/I \cong R$.

(4) We assume $$I=\{a+\sum_{i=1}^{n-1} b_{i}x^i+ \sum_{j=1}^{m-1}c_{j}y^j \in A_{n,m}(R) : a=0\},$$ so immediately $I$ is a nilpotent ideal of $A_{n,m}(R)$ and we conclude $A_{n,m}(R)/I \cong R$.

(5) We assume $$I=\{\sum_{i=0}^{n-1}\sum_{j=0}^{m-1} a_{ij}x^iy^j \in B_{n,m}(R) : a_{00}=0\},$$ so automatically $I$ is a nilpotent ideal of $B_{n,m}(R)$ and we derive $B_{n,m}(R)/I \cong R$.
\end{proof}

We now continue our work with the following necessary and sufficient condition.

\begin{proposition}\label{local}
Let $R$ be a ring with only trivial idempotents. Then, $R$ is GNC if, and only if, $R$ is a local ring with $J(R)$ nil.
\end{proposition}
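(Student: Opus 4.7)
The plan is to prove both implications directly from the definitions, using Lemma \ref{J(R) nil} for the forward direction.

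For the easy direction ($\Leftarrow$), suppose $R$ is local with $J(R)$ nil. Since $R$ is local, the set of non-units coincides with the unique maximal ideal, which equals $J(R)$. Thus every $a \in R \setminus U(R)$ lies in $J(R) \subseteq Nil(R)$, and the trivial decomposition $a = 0 + a$ with $0 \in Id(R)$ and $a \in Nil(R)$ shows $R$ is GNC.

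For the forward direction ($\Rightarrow$), assume $R$ is GNC with $Id(R) = \{0,1\}$. By Lemma \ref{J(R) nil}, $J(R)$ is nil, so it suffices to show $R$ is local, which amounts to showing $R \setminus U(R) \subseteq J(R)$. Take any $a \notin U(R)$. Since $R$ is GNC, write $a = e + q$ with $e \in Id(R)$ and $q \in Nil(R)$. By hypothesis $e \in \{0,1\}$. If $e = 1$, then $a = 1+q \in U(R)$ (since $q$ is nilpotent), contradicting $a \notin U(R)$. Hence $e = 0$ and $a = q \in Nil(R) \subseteq J(R)$. This gives $R \setminus U(R) \subseteq J(R)$, and since the reverse inclusion is automatic, $R \setminus U(R) = J(R)$ is an ideal, proving $R$ is local.

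There is no real obstacle here; the argument reduces to observing that for a ring with only trivial idempotents, the idempotent summand in any nil-clean representation of a non-unit is forced to be $0$. The role of the trivial-idempotent hypothesis is exactly to prevent a non-unit from being expressed with the idempotent part equal to $1$, which would be incompatible with being a non-unit.
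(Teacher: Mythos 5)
Your argument takes essentially the same route as the paper's: for the backward direction you use that in a local ring the non-units are exactly $J(R)\subseteq Nil(R)$, and for the forward direction you use Lemma \ref{J(R) nil} together with the observation that a nil-clean representation $a=e+q$ of a non-unit cannot have $e=1$ (else $a=1+q\in U(R)$), so every non-unit is nilpotent. All of that is correct. However, your very last step contains an unjustified inclusion: you pass from $a=q\in Nil(R)$ to $a\in J(R)$ by asserting $Nil(R)\subseteq J(R)$, and that inclusion is false in general -- nilpotent elements need not lie in the Jacobson radical, and the hypothesis that $R$ has only trivial idempotents does not rescue it (for instance, in $k\langle x,y\rangle/(x^2)$ the ring has only trivial idempotents and $x$ is nilpotent, yet $1-yx$ is not invertible, so $x\notin J$). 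As written, the deduction ``$R\setminus U(R)\subseteq J(R)$, hence the non-units form an ideal'' does not follow from what you proved.

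The gap is small and easily closed, and the paper closes it by citing \cite[Proposition 19.3]{lam fi}: once you know that every non-unit of $R$ is nilpotent, then for every $a\in R$ either $a\in U(R)$ or $a$ is nilpotent, in which case $1-a\in U(R)$; a ring in which, for every element $a$, one of $a$ and $1-a$ is a unit is local. Alternatively, you can justify the inclusion you need directly from what you established: if $q$ is nilpotent and $x\in R$ is arbitrary, then $xq$ is a left zero-divisor (or zero), hence a non-unit, hence nilpotent by your dichotomy, hence $1-xq\in U(R)$ for all $x$, which is exactly the criterion for $q\in J(R)$. With either repair the proof is complete; only the justification that the non-units form an ideal (equivalently, that $R$ is local) was missing.
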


\begin{proof}
Assuming $R$ is a GNC ring, Lemma \ref{J(R) nil} ensures that $J(R)$ is nil. Now, if $a \notin U(R)$, then we have either $a = 1 + q$ or $a = 0 + q$, where $q \in Nil(R)$. Since $a$ is not a unit, it must be that $a = 0 + q$, implying $a = q \in Nil(R)$. Thus, by virtue of \cite[Proposition 19.3]{lam fi}, $R$ is a local ring.

Now, conversely, suppose $R$ is a local ring with a nil Jacobson radical $J(R)$. So, for each $a \notin U(R)$, we have $a \in J(R) \subseteq Nil(R)$, whence $a$ is a nil-clean element.
\end{proof}

Before establishing our next main result quoted below concerning semi-local rings, two more technicalities are in order.

\begin{lemma}\cite[Theorem 3]{kosan} \label{div}
Let $D$ be a division ring. If $|D| \ge 3$ and $a \in D \backslash \{0, 1\}$, then
$\begin{pmatrix}
    a & 0 \\0 & 0
\end{pmatrix} \in M_n(D)$
is not nil-clean.
\end{lemma}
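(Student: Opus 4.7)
The natural approach is a proof by contradiction. Suppose $A := \begin{pmatrix} a & 0 \\ 0 & 0 \end{pmatrix}$ (viewed as a block matrix in $M_n(D)$) admits a decomposition $A = E + Q$ with $E^2 = E$ and $Q$ nilpotent. The plan is to exploit the relations $E^2 = E$ and $Q^k = 0$ entry-wise in order to force $E$ into a rigid almost-diagonal shape, and then to show this is incompatible with $a \notin \{0,1\}$.

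I would first handle the case $n = 2$, which already captures the core idea. Here every nilpotent $Q \in M_2(D)$ satisfies $Q^2 = 0$, so writing $E = (e_{ij})$ and $Q = A - E$ entry-wise, the $(1,2)$-entry of $E^2 = E$ gives $e_{12}e_{22} = (1-e_{11})e_{12}$, while the $(1,2)$-entry of $Q^2 = 0$ rearranges to $e_{12}e_{22} = (a - e_{11})e_{12}$. Subtracting these yields $(a-1)e_{12} = 0$; since $a \neq 1$ and $D$ is a division ring, $e_{12} = 0$. A symmetric calculation on the $(2,1)$-entry gives $e_{21} = 0$.

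Consequently $E = \mathrm{diag}(\epsilon_1, \epsilon_2)$ with $\epsilon_1, \epsilon_2 \in \{0,1\}$, and $Q = A - E = \mathrm{diag}(a - \epsilon_1, -\epsilon_2)$. A diagonal nilpotent matrix over a division ring must be zero, so $\epsilon_1 = a$ and $\epsilon_2 = 0$; but $\epsilon_1 \in \{0,1\}$ contradicts $a \notin \{0,1\}$, which settles the $n = 2$ case. Note that the hypothesis $|D| \geq 3$ enters only to guarantee that such an $a$ exists; the arithmetic of the proof uses $a \neq 0$ and $a \neq 1$.

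The hard part will be extending the argument to general $n$, where $Q$ can have nilpotence index up to $n$ and idempotents admit much richer structure than the diagonal ones above. One promising route is to use the Peirce decomposition of $M_n(D)$ relative to a well-chosen idempotent (for instance $E_{11}$) in order to reduce back to the $2 \times 2$ corner situation; another is to argue via invariants such as the rank of $E$ together with the trace of $Q$ modulo the commutator subgroup $[D,D]$, using that $A$ has rank one and trace $a$ in $D/[D,D]$. Either path needs some care because of the non-commutativity of $D$, and I would expect this bookkeeping to be the most delicate step.
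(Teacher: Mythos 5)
The paper supplies no proof of this lemma---it is imported verbatim from \cite[Theorem 3]{kosan}---so your argument must stand on its own, and as written it does not: only the case $n=2$ is actually proved. That case is handled correctly. A nonzero nilpotent of $M_2(D)$ has one-dimensional image contained in its kernel, hence square zero; your $(1,2)$- and $(2,1)$-entry computations give $(1-a)e_{12}=0$ and $e_{21}(1-a)=0$, so $e_{12}=e_{21}=0$ because $a\neq 1$ and $D$ is a division ring; and a simultaneously diagonal idempotent/nilpotent decomposition then forces $a\in\{0,1\}$. Your observation that $|D|\ge 3$ enters only to guarantee the existence of such an $a$ is also accurate.

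The genuine gap is the case $n\ge 3$, which is precisely what the paper needs: Lemma \ref{div GNC} applies the statement inside $M_n(D)$ for every $n\ge 2$, with no reduction to the $2\times 2$ case. Your entry-wise argument leans essentially on $Q^2=0$, which fails once the nilpotency index can reach $n$, and neither of the two routes you sketch is carried out or obviously viable. The trace invariant cannot do the job by itself: already for $D=\mathbb{Z}_3$, $a=2$, $n=2$, the congruence $\mathrm{tr}(A)\equiv r\cdot 1 \pmod{[D,D]}$ is satisfied by the rank $r=2$ (i.e.\ $E=I$), yet $A-I=\mathrm{diag}(1,-1)$ is invertible rather than nilpotent; one would still have to eliminate every admissible rank by a separate argument. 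The Peirce/conjugation route has its own obstruction: putting $E$ into the form $\mathrm{diag}(I_r,0)$ replaces $aE_{11}$ by an arbitrary rank-one matrix, and corners of nilpotent matrices over a division ring need not be nilpotent, so the passage back to a $2\times 2$ configuration is not automatic. To close the lemma you should either work out one of these reductions in full or simply cite \cite[Theorem 3]{kosan}, as the paper does.
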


\begin{lemma} \label{div GNC}
Let $n\ge 2$ and let $D$ be a division ring. Then, the matrix ring $M_n(D)$ is a GNC ring if, and only if, $D \cong \mathbb{Z}_2$.
\end{lemma}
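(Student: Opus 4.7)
The plan is to leverage Lemma~\ref{div} for the forward direction and invoke the (classical) nil-cleanness of $M_n(\mathbb{Z}_2)$ for the converse; both directions reduce to short, conceptually transparent arguments, so I do not expect any real obstacle here.

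For the forward implication, I would argue by contradiction. Assume $M_n(D)$ is GNC and suppose, in search of a contradiction, that $|D|\ge 3$. Then one can pick some $a\in D\setminus\{0,1\}$ and form the matrix $A\in M_n(D)$ whose $(1,1)$-entry equals $a$ and whose other entries are all zero---equivalently, the $n\times n$ matrix whose top-left $2\times 2$ block is $\begin{pmatrix} a & 0\\ 0 & 0\end{pmatrix}$ and which vanishes outside that block. Since the rank of $A$ is at most $1<n$, it is singular, hence $A\notin U(M_n(D))$, so the GNC hypothesis forces $A$ to be nil-clean. But this is precisely what Lemma~\ref{div} rules out, and the contradiction forces $|D|\le 2$, giving $D\cong\mathbb{Z}_2$.

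For the reverse implication, it suffices to verify that $M_n(\mathbb{Z}_2)$ is already nil-clean, since any nil-clean ring is trivially GNC (every non-unit, being an element, is then nil-clean). This is a well-known fact: $\mathbb{Z}_2$ is nil-clean (each element being its own idempotent part), and nil-cleanness of the base ring transfers to the full matrix ring by standard results in the literature (see, e.g., \cite{BCDM} or \cite{kosan1}). Therefore every element---and in particular every non-unit---of $M_n(\mathbb{Z}_2)$ is nil-clean, establishing that $M_n(\mathbb{Z}_2)$ is GNC.

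The only point of care is checking that Lemma~\ref{div} is applicable as stated when $n>2$, but the lemma is already phrased for general $n\ge 2$ with the obvious embedding of the displayed $2\times 2$ block into $M_n(D)$, so no additional work is required. The substantive content of the proof therefore resides entirely in Lemma~\ref{div} and in the cited nil-cleanness of $M_n(\mathbb{Z}_2)$; the argument above is essentially a bookkeeping exercise that ties the two together.
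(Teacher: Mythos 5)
Your proof is correct and follows essentially the same route as the paper: the forward direction invokes Lemma~\ref{div} on the singular matrix with $(1,1)$-entry $a\in D\setminus\{0,1\}$, and the converse rests on the nil-cleanness of $M_n(\mathbb{Z}_2)$ from \cite{BCDM}. (You even state the hypothesis $|D|\ge 3$ more carefully than the paper's own wording does.)
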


\begin{proof}
If foremost $D \cong \mathbb{Z}_2$, then with the aid of \cite{BCDM} the ring $M_n(D)$ is GNC.

Next, conversely, if $M_n(D)$ is a GNC ring and $|D| \ge 2$, then Lemma \ref{div} gives that, for every $a \in D \backslash \{0,1\}$, the element $\begin{pmatrix}
    a & 0 \\0 & 0
\end{pmatrix} \in M_n(D)$ is simultaneously not a unit in $M_n(D)$ and not nil-clean, leading to a contradiction. Therefore, it must be that $D \cong \mathbb{Z}_2$, as given in the text.
\end{proof}

Notice that Diesl raised the question in \cite{diesl} whether a matrix ring over a nil-clean ring is also nil-clean. However, based on the last lemma, this question definitely has a negative answer for the class of GNC rings, as $\mathbb{Z}_3$ is a GNC ring, but, for any $n\ge 2$, $M_n(\mathbb{Z}_3)$ is {\it not} a GNC ring.

\medskip

We are now ready to prove the mentioned above result.

\begin{theorem}\label{semilocal}
Let $R$ be a ring. Then, the following conditions are equivalent for a semi-local ring:

(1) $R$ is a GNC ring.

(2) Either $R$ is a local ring with a nil Jacobson radical, or $R/J(R) \cong M_n(\mathbb{Z}_2)$ with a nil Jacobson radical, or $R$ is a nil-clean ring.
\end{theorem}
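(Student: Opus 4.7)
My plan is to prove the equivalence by handling the easy direction (2) $\Rightarrow$ (1) case-by-case and then reducing the converse to the Wedderburn structure of semisimple Artinian GNC rings by passing to $R/J(R)$.

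For (2) $\Rightarrow$ (1): a local ring with nil Jacobson radical is GNC by Proposition~\ref{local}; if $R/J(R) \cong M_n(\mathbb{Z}_2)$ with $J(R)$ nil, then Lemma~\ref{div GNC} guarantees $M_n(\mathbb{Z}_2)$ is GNC, and Proposition~\ref{factor}(2) lifts this property back to $R$; finally, a nil-clean ring is trivially GNC because every element (and not just every non-unit) is already nil-clean.

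For (1) $\Rightarrow$ (2): by Lemma~\ref{J(R) nil}, $J(R)$ is nil, and since $R$ is semi-local, the quotient $R/J(R)$ is semisimple Artinian. By Corollary~\ref{image}, $R/J(R)$ is again GNC, so I may invoke the Wedderburn decomposition
$$R/J(R) \cong \prod_{i=1}^{k} M_{n_i}(D_i),$$
where each $D_i$ is a division ring. I then split into three cases:
\begin{enumerate}
    \item[(a)] If $k=1$ and $n_1=1$, then $R/J(R)$ is a division ring, so $R$ is a local ring with nil Jacobson radical, which is the first clause of (2).
    \item[(b)] If $k=1$ and $n_1 \ge 2$, then Lemma~\ref{div GNC} applied to the GNC ring $R/J(R)$ forces $D_1 \cong \mathbb{Z}_2$, so $R/J(R) \cong M_{n_1}(\mathbb{Z}_2)$, which is the second clause.
    \item[(c)] If $k \ge 2$, then Proposition~\ref{factor}(3) forces each factor $M_{n_i}(D_i)$ to be nil-clean; hence $R/J(R)$ itself is nil-clean, and I must lift this property through $J(R)$ in order to conclude that $R$ is nil-clean.
\end{enumerate}

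The only real technical point is step (c), namely that nil-cleanness lifts through the nil ideal $J(R)$. Given $a \in R$, write $\overline{a} = \overline{e_0} + \overline{q_0}$ in $R/J(R)$ with $\overline{e_0}$ idempotent and $\overline{q_0}$ nilpotent. Because $J(R)$ is nil, idempotents lift modulo $J(R)$, so I can pick $e \in Id(R)$ mapping to $\overline{e_0}$. Then $\overline{a-e} = \overline{q_0}$ is nilpotent in $R/J(R)$, so $(a-e)^m \in J(R)$ for some $m \in \mathbb{N}$; using that $J(R)$ is nil, a further power of $a-e$ vanishes, forcing $a - e \in Nil(R)$. Thus $a = e + (a-e)$ is a nil-clean representation in $R$, which delivers the third clause of (2) and finishes the proof.
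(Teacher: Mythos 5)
Your proposal is correct and follows essentially the same route as the paper: both directions reduce to the Wedderburn decomposition of $R/J(R)$, with Lemma~\ref{div GNC} handling the single-factor case and Proposition~\ref{factor}(3) handling the case of several factors. The only difference is cosmetic: where the paper cites \cite[Corollary 6]{kosan} to conclude that $R$ is nil-clean, you lift nil-cleanness through the nil ideal $J(R)$ directly by lifting the idempotent and observing that an element whose power lies in a nil ideal is itself nilpotent, which is a valid self-contained substitute.
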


\begin{proof}
(2) $\Rightarrow$ (1). The proof is straightforward by combination of Proposition \ref{local} and Lemma \ref{div GNC}.

(1) $\Rightarrow$ (2). Since $R$ is semi-local, we write $R/J(R) \cong \prod_{i=1}^{m}M_{n_i}(D_i)$, where each $D_i$ is a division ring. Moreover, the application of Theorem \ref{factor}(2) leads to $J(R)$ is nil, and $R/J(R)$ is a GNC ring. If $m = 1$, then by Lemma \ref{div GNC} we have either $R/J(R) = D_1$ or $R/J(R) \cong M_n(\mathbb{Z}_2)$. If $m > 1$, then by Theorem \ref{factor}(3), for each $1 \le i \le m$, the ring $M_{n_i}(D_i)$ is nil-clean. Finally, referring to Lemma \ref{div GNC}, for any $1 \le i \le m$, we have $D_i \cong \mathbb{Z}_2$. Thus, \cite[Corollary 6]{kosan} applies to conclude that $R$ is a nil-clean ring.
\end{proof}

As an immediate consequence, we find:

\begin{corollary} \label{artinian}
Let $R$ be an Artinian (in particular, a finite) ring. Then, the following conditions are equivalent:

(1) $R$ is a GNC ring.

(2) Either $R$ is a local ring with a nil Jacobson radical, or $R$ is a nil-clean ring.
\end{corollary}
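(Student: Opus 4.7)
The plan is to deduce this corollary directly from Theorem~\ref{semilocal} by observing that, in the Artinian setting, the middle alternative of that theorem collapses into the nil-clean alternative.

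First, I would note that every (left or right) Artinian ring is automatically semi-local, and that in such a ring the Jacobson radical $J(R)$ is not merely nil but actually nilpotent; hence the hypothesis ``$J(R)$ nil'' is free of charge. Thus Theorem~\ref{semilocal} applies and gives, for GNC, the trichotomy: (a) $R$ is local with $J(R)$ nil, (b) $R/J(R)\cong M_n(\mathbb{Z}_2)$ with $J(R)$ nil, or (c) $R$ is nil-clean.

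The key step is to merge case (b) into case (c). In case (b) we have $R/J(R)\cong M_n(\mathbb{Z}_2)$, which is known to be nil-clean (by the Breaz--C\u{a}lug\u{a}reanu--Danchev--Micu / K\"osan result already invoked in the proof of Lemma~\ref{div GNC} and in the last step of Theorem~\ref{semilocal}). Since $J(R)$ is a nil ideal and nil-cleanness lifts through nil ideals (Diesl, \cite{diesl}: a ring $R$ is nil-clean iff $R/I$ is nil-clean for some nil ideal $I$ with idempotents lifting, which is automatic for a nil ideal), it follows that $R$ itself is nil-clean. Hence (b) $\Rightarrow$ (c), and the trichotomy reduces to the stated dichotomy.

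For the converse direction, (2) $\Rightarrow$ (1) is immediate: if $R$ is local with $J(R)$ nil, then Proposition~\ref{local} yields that $R$ is GNC, and if $R$ is nil-clean then trivially $R\setminus U(R)\subseteq R=\mathrm{Id}(R)+\mathrm{Nil}(R)$, so $R$ is GNC. The only real subtlety is the lifting argument used to fold case (b) into case (c); once that is in place the rest is bookkeeping, so I do not anticipate any genuine obstacle beyond citing the appropriate lifting lemma from \cite{diesl} (or equivalently \cite[Corollary~6]{kosan}, already used in Theorem~\ref{semilocal}).
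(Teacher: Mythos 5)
Your proposal is correct and is exactly the route the paper intends: the corollary is stated as an immediate consequence of Theorem~\ref{semilocal}, using that an Artinian ring is semi-local with nilpotent (hence nil) Jacobson radical, and that the middle alternative $R/J(R)\cong M_n(\mathbb{Z}_2)$ with $J(R)$ nil folds into nil-cleanness since $M_n(\mathbb{Z}_2)$ is nil-clean and nil-cleanness lifts modulo nil ideals. Your write-up simply makes explicit the bookkeeping the paper leaves to the reader.
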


\begin{example}
In the special case of the ring $\mathbb{Z}_n$, where $n\in \mathbb{N}$, it is GNC if, and only if, $n$ is power of a prime number.
\end{example}

Our next two consequences are these:

\begin{corollary} \label{semi-simple}
Let $R$ be a ring. Then, the following issues are equivalent for a semi-simple ring $R$:

(1) $R$ is a GNC ring.

(2) $R$ is either a division ring, or a nil-clean ring.
\end{corollary}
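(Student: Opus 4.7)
The strategy is to specialize Theorem~\ref{semilocal} to the case of vanishing Jacobson radical, which is automatic for a semi-simple ring $R$ (since $J(R)=0$ and such a ring is, in particular, semi-local).

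For the direction (1) $\Rightarrow$ (2), I will apply Theorem~\ref{semilocal} directly and examine how each of its three alternatives behaves once $J(R)=0$. The first alternative, ``$R$ is local with $J(R)$ nil'', becomes a local ring whose only non-unit is $0$, i.e., a division ring (since in a local ring the non-units are exactly $J(R)$). The second alternative, ``$R/J(R)\cong M_n(\mathbb{Z}_2)$ with $J(R)$ nil'', reduces to $R\cong M_n(\mathbb{Z}_2)$, which is nil-clean, as is the ingredient already invoked inside Lemma~\ref{div GNC} (via \cite{BCDM}). The third alternative, that $R$ is nil-clean, is already the desired conclusion. Hence each branch lands in (2).

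For the converse (2) $\Rightarrow$ (1), both cases are immediate. A nil-clean ring is trivially GNC, since every non-unit is, in particular, a nil-clean element. A division ring is GNC because its only non-unit is $0 = 0 + 0 \in Id(R) + Nil(R)$, which is a valid (trivial) nil-clean representation.

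The main obstacle, if one can call it that, is essentially bookkeeping: recognizing that once $J(R)=0$ the three cases of Theorem~\ref{semilocal} collapse into the two cases of the corollary. No idea beyond the semi-local classification is needed, so the statement is a genuine one-step corollary of what has already been established.
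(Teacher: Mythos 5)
Your proof is correct and follows exactly the route the paper intends: the corollary is stated as an immediate consequence of Theorem~\ref{semilocal}, and specializing its three alternatives to $J(R)=0$ (local $\Rightarrow$ division ring; $R\cong M_n(\mathbb{Z}_2)$ $\Rightarrow$ nil-clean; nil-clean $\Rightarrow$ nil-clean) together with the trivial converse is precisely the intended argument.
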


\begin{corollary}
Suppose \( n \ge 2 \) and \( R \) is a semi-local (or an Artinian or a semi-simple) ring. Then, the ring \( M_n(R) \) is a GNC ring if, and only if, \( M_n(R) \) is a nil-clean ring.
\end{corollary}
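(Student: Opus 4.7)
The forward implication is immediate: any nil-clean ring is GNC since every element (unit or not) admits a nil-clean decomposition. So the content lies entirely in the reverse direction, and the plan is to feed $M_n(R)$ into the classification results established just above and eliminate every option except nil-cleanness.

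First I would observe that each of the three hypotheses (semi-local, Artinian, semi-simple) is inherited by $M_n(R)$, so Theorem~\ref{semilocal} applies to $M_n(R)$ in the semi-local case (and Corollaries~\ref{artinian} and \ref{semi-simple} in the other two cases). Assuming $M_n(R)$ is GNC, Theorem~\ref{semilocal} gives three possibilities for $M_n(R)$:
\begin{enumerate}
\item $M_n(R)$ is a local ring with nil Jacobson radical;
\item $M_n(R)/J(M_n(R)) \cong M_k(\mathbb{Z}_2)$ for some $k$, with $J(M_n(R))$ nil;
\item $M_n(R)$ is nil-clean.
\end{enumerate}

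The first possibility is ruled out at once because $E_{11} \in M_n(R)$ is a nontrivial idempotent for $n \ge 2$, whereas a local ring has only trivial idempotents. The third is exactly what we want. So the real content is to show that case (2) forces $M_n(R)$ to be nil-clean. Here I would invoke the Koşan--Wang--Zhou criterion \cite[Corollary 6]{kosan}: a ring $S$ is nil-clean if and only if $J(S)$ is nil and $S/J(S)$ is a direct product of matrix rings over $\mathbb{Z}_2$. In our case (2) the quotient $M_n(R)/J(M_n(R))$ is already the single factor $M_k(\mathbb{Z}_2)$ and $J(M_n(R))$ is nil, so the criterion applies directly and yields that $M_n(R)$ is nil-clean. (Alternatively, one could argue that $M_k(\mathbb{Z}_2)$ is nil-clean by \cite{BCDM} and then lift a nil-clean decomposition through the nil ideal $J(M_n(R))$, using that idempotents lift modulo nil ideals.)

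For the Artinian case the same strategy uses Corollary~\ref{artinian}, which leaves only the two possibilities ``local with nil $J$'' or ``nil-clean''; the local possibility is again excluded by the existence of $E_{11}$. In the semi-simple case, Corollary~\ref{semi-simple} reduces the alternatives to ``division ring or nil-clean'', and $M_n(R)$ with $n \ge 2$ is never a division ring (it has nontrivial zero divisors), so $M_n(R)$ must be nil-clean. The only step that is not essentially bookkeeping is case (2) of the semi-local argument; once \cite[Corollary 6]{kosan} (already cited in the proof of Theorem~\ref{semilocal}) is invoked, everything else is formal.
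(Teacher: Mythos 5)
Your proof is correct and follows essentially the same route as the paper, which simply cites Theorem~\ref{semilocal}, Corollary~\ref{artinian} and Corollary~\ref{semi-simple}. You merely make explicit the small steps the paper leaves implicit—ruling out the local and division-ring alternatives via the nontrivial idempotent $E_{11}$ (resp.\ zero divisors) for $n\ge 2$, and absorbing the $M_k(\mathbb{Z}_2)$ case into nil-cleanness via \cite[Corollary 6]{kosan}—all of which is sound.
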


\begin{proof}
Taking into account Theorem \ref{semilocal}, Corollary \ref{artinian} and Corollary \ref{semi-simple}, nothing remains to be proven.
\end{proof}

The next claim is well-known, but we state it here only for the sake of completeness and the readers' convenience. Following \cite{nic}, recall also that a ring $R$ is {\it exchange}, provided that, for any $a$ in $R$, there is an idempotent $e \in R$ such that $e \in aR$ and $1 - e \in (1 - a)R$.

\begin{lemma}\label{domain}
Let $R$ be an exchange domain with $J(R) = \{0\}$, then $R$ is a division ring.
\end{lemma}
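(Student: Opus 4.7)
The plan is to show that $R$ is local, and then observe that a local ring with vanishing Jacobson radical must be a division ring. Since $R$ is a domain, the only idempotents of $R$ are $0$ and $1$, so the exchange condition will collapse into a clean dichotomy on each element.

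Concretely, I would take an arbitrary $a\in R$ and apply the exchange property: there is an idempotent $e\in R$ with $e\in aR$ and $1-e\in(1-a)R$. Since $R$ has no nontrivial idempotents, either $e=1$, in which case $1\in aR$ and so $ab=1$ for some $b$, or $e=0$, in which case $(1-a)c=1$ for some $c$. In the first case, $a(ba-1)=(ab)a-a=0$, and since $R$ is a domain with $a\neq 0$ (as $ab=1$), we conclude $ba=1$, hence $a\in U(R)$. The symmetric argument in the second case shows $1-a\in U(R)$.

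Thus for every $a\in R$, at least one of $a,\,1-a$ is a unit, which is the standard characterization of a local ring. The unique maximal ideal of a local ring is $J(R)$, which by hypothesis is $\{0\}$. Therefore every non-zero element of $R$ lies outside the maximal ideal and is a unit, so $R$ is a division ring.

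I do not anticipate a real obstacle here: the only subtlety worth being careful about is the passage from one-sided to two-sided invertibility, which is where the domain hypothesis is genuinely used, and the recollection of the characterization of local rings as those satisfying ``$a$ or $1-a$ is a unit for every $a$.'' Both are short and standard, so the argument should be essentially as sketched above.
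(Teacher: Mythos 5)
Your proof is correct and rests on essentially the same idea as the paper's: a domain has only the trivial idempotents $0$ and $1$, so the exchange condition collapses to a unit dichotomy, and $J(R)=\{0\}$ finishes the job. The only cosmetic difference is that the paper applies exchange to $ax$ for every $x$ to place a non-unit $a$ directly in $J(R)$, whereas you apply it to $a$ alone and invoke the standard ``$a$ or $1-a$ is a unit'' characterization of local rings; your handling of the one-sided-to-two-sided invertibility via the domain hypothesis is also fine.
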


\begin{proof}
Assume that \( a \in R \backslash U(R) \) and \( x \) is an arbitrary element of \( R \). Then, there exists \( e \in Id(R) \) such that \( e \in Rax \) and \( 1-e \in R(1-ax) \). Since \( R \) is a domain, we have either \( e=0 \) or \( e=1 \). Because \( a \notin U(R) \), it follows that \( e=0 \). Therefore, \( a \in J(R)=0 \). Thus, \( R \) must be a division ring, as stated.
\end{proof}

We are now in a position to provide a confirmation of the following statement.

\begin{proposition} \label{matrix and 2-primal}
Let $R$ be a $2$-primal ring and $n \ge 2$. Then, $M_n(R)$ is GNC if, and only if, $R/J(R)$ is Boolean and $J(R)$ is nil.
\end{proposition}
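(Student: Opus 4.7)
The plan is to split into the two directions, with the forward implication being substantially harder. The backward direction reduces to a known fact about nil-clean rings, while the forward direction is bootstrapped from a key lemma about reduced rings whose matrix ring is GNC.

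For the easy direction ($\Leftarrow$), I would first observe that the hypotheses make $R$ itself nil-clean: every $a \in R$ projects to an idempotent $\bar a \in R/J(R)$, idempotents lift modulo the nil ideal $J(R)$, so we can write $a = e + (a-e)$ with $e \in Id(R)$ and $a - e \in J(R) \subseteq Nil(R)$. Then I would invoke the (now classical) result that a matrix ring over a nil-clean ring remains nil-clean to conclude $M_n(R)$ is nil-clean, and hence a fortiori GNC.

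For the forward direction ($\Rightarrow$), I would proceed in four steps. First, apply Lemma~\ref{J(R) nil} to $M_n(R)$: since $J(M_n(R)) = M_n(J(R))$ must be nil, so is $J(R)$. Second, use the $2$-primal hypothesis to get $Nil(R) = Nil_*(R)$; combined with the general inclusions $Nil_*(R) \subseteq J(R) \subseteq Nil(R)$, this forces $J(R) = Nil_*(R)$, so $S := R/J(R)$ is reduced. Third, by Corollary~\ref{image}, $M_n(S) \cong M_n(R)/J(M_n(R))$ is GNC. The fourth and main step is to show that a reduced ring $S$ with $M_n(S)$ GNC is necessarily Boolean. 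For this, I would exploit that a reduced ring is trivially $2$-primal, so each minimal prime $P_i$ of $S$ is completely prime and $\bigcap_i P_i = Nil_*(S) = 0$. Hence $S$ embeds subdirectly into $\prod_i D_i$ with each $D_i := S/P_i$ a domain, and each $M_n(D_i)$, being a homomorphic image of the GNC ring $M_n(S)$, is GNC (Corollary~\ref{image}). To finish, I would argue each $D_i \cong \mathbb{Z}_2$: for any candidate $a \in D_i \setminus \{0,1\}$, the matrix $\operatorname{diag}(a, 0, \ldots, 0)$ is non-invertible in $M_n(D_i)$, hence nil-clean; embedding $D_i$ into its classical division ring of fractions $K_i$, the same representation persists in $M_n(K_i)$, forcing $|K_i| \le 2$ by Lemma~\ref{div}, a contradiction. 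Consequently $S \hookrightarrow \prod \mathbb{Z}_2$ is Boolean, which completes the proof.

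The main obstacle is the last step for \emph{non-commutative} domains $D_i$: the classical ring of fractions exists only under the Ore condition, which is not automatic. In the commutative case the fraction field exists without hypothesis and the argument runs cleanly; otherwise one must either leverage the GNC structure of $M_n(D_i)$ to verify the Ore condition in $D_i$, or bypass the fraction construction altogether by giving a direct argument that $\operatorname{diag}(a, 0, \ldots, 0)$ cannot be nil-clean in $M_n(D)$ whenever $D$ is a domain with more than two elements, thereby extending Lemma~\ref{div} from division rings to domains.
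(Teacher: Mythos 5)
Your outline of the forward direction follows the paper through its first three steps (passing to $J(M_n(R))=M_n(J(R))$ nil, reducing to $S=R/J(R)$ reduced via $2$-primality, and decomposing $S$ subdirectly into domains $D_i$ with each $M_n(D_i)$ GNC), but the decisive fourth step --- forcing each $D_i\cong\mathbb{Z}_2$ --- is left with a gap that you yourself flag and do not close. Your localization argument only works when $D_i$ is an Ore domain (in particular in the commutative case), and neither of your two proposed repairs is actually carried out, so as written the proof covers only commutative (or Ore) subdirect factors. The repair the paper uses is essentially the first alternative you mention: since $M_n(D_i)$ is GNC it is clean (Corollary \ref{clean}), hence an exchange ring, hence $D_i$ is exchange by \cite[Proposition 1.10]{nic}; moreover $J(M_n(D_i))=M_n(J(D_i))$ is nil by Lemma \ref{J(R) nil}, and a domain has no nonzero nil ideals, so $J(D_i)=0$; now Lemma \ref{domain} makes $D_i$ a division ring, and Lemma \ref{div GNC} gives $D_i\cong\mathbb{Z}_2$ with no ring of fractions needed. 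Some such argument is indispensable, and its absence is a genuine gap in the main step.

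The backward direction also rests on a false premise: the assertion that a matrix ring over a nil-clean ring is again nil-clean is not a classical result --- it is precisely Diesl's question, which this very paper records as unresolved (and answers negatively with ``nil-clean'' replaced by ``GNC''). Fortunately you do not need it: under the hypotheses $M_n(R)/J(M_n(R))\cong M_n(R/J(R))$ is nil-clean because $R/J(R)$ is Boolean \cite[Corollary 6]{BCDM}, and $J(M_n(R))=M_n(J(R))$ is nil, so $M_n(R)$ is nil-clean by lifting idempotents modulo a nil ideal (equivalently, quote \cite[Theorem 6.1]{kosan1} as the paper does). Replacing the appeal to the open problem by this two-line argument fixes that half.
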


\begin{proof}
$(\Leftarrow).$ The proof is straightforward using \cite[Theorem 6.1]{kosan1}.

$(\Rightarrow).$ Assume $M_n(R)$ is a GNC ring. Since $M_n(J(R)) = J(M_n(R))$ (see cf. \cite{lam fi}), Lemma \ref{J(R) nil} is a guarantor that $J(R)$ is nil. Now, we menage to show that $R/J(R)$ is a Boolean ring. Indeed, since $R$ is a $2$-primal ring, we have $Nil_{*}(R) = J(R) = Nil(R)$, so that the factor-ring $R/J(R)$ is reduced. Therefore, $R/J(R)$  is a sub-direct product of a family of domains $\{S_i\}_{i \in I}$. As being a homomorphic image of $M_n(R/J(R))$, the ring $M_n(S_i)$ is also GNC in accordance with Corollary~\ref{image}, and hence it is clean with the help of Lemma \ref{clean}. Thus, $M_n(S_i)$ is an exchange ring, so by \cite[Proposition 1.10 ]{nic}, for each $i \in I$, $S_i$ is an exchange ring too. So, adapting Lemma \ref{domain}, for each $i \in I$, $S_i$ must be a division ring. Likewise, since $M_n(S_i)$ is a GNC ring, knowing Lemma \ref{div GNC}, for each $i \in I$, it must be that $S_i \cong \mathbb{Z}_2$. This, after all, implies that $R/J(R)$ is a Boolean ring, as promised.
\end{proof}

Since it is well-established in \cite{DL} and \cite{kosan1} that a ring $R$ is strongly nil-clean precisely when the quotient-ring $R/J(R)$ is Boolean and the ideal $J(R)$ is nil, the next consequence is directly fulfilled. However, we now intend to give an independent verification as follows.

\begin{corollary}
Let $R$ be a $2$-primal ring and $n \ge 2$. Then, $M_n(R)$ is GNC if, and only if, $R$ is a (strongly) nil-clean ring.
\end{corollary}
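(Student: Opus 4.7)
The plan is to prove the biconditional by combining Proposition~\ref{matrix and 2-primal} with a self-contained passage between the condition ``$R/J(R)$ is Boolean and $J(R)$ is nil'' and the condition ``$R$ is (strongly) nil-clean'', so that the argument does not invoke the characterization of strongly nil-clean rings from \cite{DL} and \cite{kosan1} as a black box (which is precisely what the authors wish to avoid).

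For the forward direction, suppose $M_n(R)$ is GNC. Proposition~\ref{matrix and 2-primal} immediately yields that $J(R)$ is nil and $R/J(R)$ is Boolean. Fix an arbitrary $a\in R$; I will manufacture a strongly nil-clean decomposition of it. Since $R/J(R)$ is Boolean, $a-a^{2}\in J(R)\subseteq Nil(R)$. Work inside the \emph{commutative} unital subring $S:=\mathbb{Z}[a]\subseteq R$. Then $I:=S\cap J(R)$ is a nil ideal of $S$, and in the quotient $S/I$ the coset of $a$ is idempotent. The classical lifting of idempotents along a nil ideal of a commutative ring now furnishes some $e=e^{2}\in S$ with $a-e\in I\subseteq Nil(R)$. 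Because $e\in \mathbb{Z}[a]$, it commutes with $a$, and thus $a=e+(a-e)$ is a strongly nil-clean representation of $a$. Since $a$ was arbitrary, $R$ is strongly nil-clean, and in particular nil-clean.

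For the reverse direction, assume $R$ is (strongly) nil-clean. Then by the main theorem of \cite{BCDM}, $M_n(R)$ is itself nil-clean, so every element of $M_n(R)$, and a fortiori every non-unit, is a sum of an idempotent and a nilpotent; that is, $M_n(R)$ is GNC.

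The only delicate point is the idempotent lifting in the forward direction: one must produce an idempotent that actually \emph{commutes} with $a$, not merely some lift in $R$. Restricting the lift to the commutative subring $\mathbb{Z}[a]$ before invoking the nil lift sidesteps this subtlety and reduces the task to the routine commutative lifting principle, so I do not anticipate a genuine obstacle beyond careful bookkeeping.
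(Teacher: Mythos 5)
Your forward direction is sound and, in fact, more self-contained than the paper's: after extracting from Proposition~\ref{matrix and 2-primal} that $J(R)$ is nil and $R/J(R)$ is Boolean, your lifting of the idempotent inside the commutative subring $\mathbb{Z}[a]$ correctly produces a commuting idempotent--nilpotent decomposition of an arbitrary $a$ (since $a-a^2\in J(R)$ is nilpotent and the lift $e$ lies in $\mathbb{Z}[a]$), whereas the paper simply quotes the characterization of strongly nil-clean rings from \cite{DL} and \cite{kosan1} at this point. That half of your argument is a legitimate and arguably preferable alternative.

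The reverse direction, however, contains a genuine gap. You assert that ``by the main theorem of \cite{BCDM}, $M_n(R)$ is itself nil-clean'' whenever $R$ is nil-clean. No such theorem exists: the main results of \cite{BCDM} concern matrix rings over fields (and, via \cite{kosan}, division rings), and the statement you invoke --- that matrix rings over nil-clean rings are nil-clean --- is precisely Diesl's open question from \cite{diesl}, which this very paper recalls (right after Lemma~\ref{div GNC}) as unresolved in general. A telltale sign is that your argument for this direction never uses the $2$-primal hypothesis. The repair is short and is essentially what the paper does: since $R$ is nil-clean, $J(R)$ is nil by \cite[Proposition 3.16]{diesl}; since $R$ is $2$-primal and $J(R)$ is nil, $J(R)=Nil_*(R)$ and $R/J(R)$ is reduced, hence abelian, so the nil-clean ring $R/J(R)$ is strongly nil-clean and therefore Boolean by \cite[Theorem 2.5]{chenshm}; now the $(\Leftarrow)$ direction of Proposition~\ref{matrix and 2-primal} (equivalently, \cite[Corollary 6]{BCDM} applied to the Boolean ring $R/J(R)$ together with lifting modulo the nil ideal $M_n(J(R))=J(M_n(R))$) yields that $M_n(R)$ is nil-clean, hence GNC. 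Alternatively, your own forward-direction computation already shows that a $2$-primal nil-clean ring is strongly nil-clean with $R/J(R)$ Boolean, after which the same citation closes the argument.
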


\begin{proof}
With Proposition \ref{matrix and 2-primal} at hand, it is sufficient to show that if $R$ is a nil-clean ring, then $M_n(R)$ is a GNC ring. Since $R$ is nil-clean, $J(R)$ is nil by \cite[Proposition 3.16]{diesl}. Now, by Proposition \ref{matrix and 2-primal}, it is enough to show that $R/J(R)$ is Boolean. Since $R$ is a 2-primal ring, $R/J(R)$ is abelian. On the other hand, any abelian nil-clean ring is strongly nil-clean, so $R/J(R)$ is strongly nil-clean. Therefore, by \cite[Theorem 2.5]{chenshm}, $R/J(R)$ is Boolean.
\end{proof}

\begin{remark} It was proved in \cite[Corollary 2.12]{Diran} that if $R$ is a $2$-primal strongly nil-clean ring, then $M_n(R)$ is nil-clean (and thus GNC) for each $n \geq 1$.
\end{remark}

We now have all the information needed to prove the following.

\begin{lemma} \label{abelian}
Let $R$ be an abelian ring. Then, the following conditions are equivalent:

(1) $R$ is a GNC ring.

(2) Either $R$ is a local ring with nil Jacobson radical, or $R$ is a strongly nil-clean ring.
\end{lemma}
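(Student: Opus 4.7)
The plan is to prove (2) $\Rightarrow$ (1) quickly, and then use a dichotomy on the idempotents for (1) $\Rightarrow$ (2). For (2) $\Rightarrow$ (1), if $R$ is local with $J(R)$ nil, Proposition \ref{local} gives the GNC property immediately; if $R$ is strongly nil-clean, then in particular every element of $R$ is nil-clean, so every non-unit is nil-clean and $R$ is GNC.

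For the harder direction (1) $\Rightarrow$ (2), I would split on whether $R$ has a nontrivial idempotent. First, Lemma \ref{J(R) nil} ensures $J(R)$ is nil in all cases. If $\mathrm{Id}(R) = \{0,1\}$, then Proposition \ref{local} delivers exactly the first alternative in (2): $R$ is local with nil Jacobson radical. Otherwise, pick $e \in \mathrm{Id}(R) \setminus \{0,1\}$. Since $R$ is abelian, $e$ is central, so $R \cong eRe \times (1-e)R(1-e)$ as rings. By Corollary \ref{image} each factor inherits the GNC property, but what is crucial is the direct product criterion: since the product has two factors, Proposition \ref{factor}(3) forces both $eRe$ and $(1-e)R(1-e)$ to be nil-clean. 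Therefore $R$ itself is nil-clean.

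It remains to upgrade ``nil-clean'' to ``strongly nil-clean'' under the abelian hypothesis, and this is precisely the known fact already invoked in the proof of the previous corollary: any abelian nil-clean ring is strongly nil-clean (the centrality of idempotents makes the nil-clean decomposition automatically commute). Applying this to $R$ puts us in the second alternative of (2), and the equivalence is complete.

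The main obstacle is conceptual rather than technical: one needs to recognize that the abelian hypothesis forces a clean splitting of $R$ along any nontrivial idempotent, so that the dichotomy ``no nontrivial idempotent'' versus ``some nontrivial idempotent'' maps cleanly onto the two alternatives ``local'' versus ``strongly nil-clean''. Once this structural observation is made, everything reduces to previously established items (Lemma \ref{J(R) nil}, Proposition \ref{local}, Proposition \ref{factor}(3), and the standard abelian-nil-clean-implies-strongly-nil-clean result), and no new calculation is required.
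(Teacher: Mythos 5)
Your proposal is correct and follows essentially the same route as the paper: split on the existence of a nontrivial idempotent (equivalently, via Proposition \ref{local}, on whether $R$ is local), decompose $R \cong eRe \times (1-e)R(1-e)$ along a central idempotent, apply Proposition \ref{factor}(3) to get both factors nil-clean, conclude $R$ is nil-clean (the paper cites Diesl's Proposition 3.13 for the product step), and finish with the fact that abelian nil-clean rings are strongly nil-clean. No substantive differences.
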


\begin{proof}
The implication (2) $\Rightarrow$ (1) is simple, so we remove its inspection.

(1) $\Rightarrow$  (2). Let $R$ be a GNC ring that is not local. Then, with the aid of Proposition \ref{local}, there exists a non-trivial idempotent $e \in Id(R)$. Since $R$ is abelian, we have $R=eRe\oplus(1-e)R(1-e)$. Therefore, Theorem \ref{factor}(3) is applicable to deduce that both the rings $eRe$ and $(1-e)R(1-e)$ are nil-clean. Thus, using \cite[Proposition 3.13]{diesl}, $R$ is a nil-clean ring. But, as $R$ is abelian, it must be strongly nil-clean.
\end{proof}

As usual, we call a ring an {\it NR} (resp., an {\it NI}) ring if its set of nilpotent elements forms a subring (resp., an ideal).

\begin{lemma} \label{NR}
Let $R$ be an NR ring. Then, the following two conditions are equivalent:

(1) $R$ is a GNC ring.

(2) $R$ is either local with nil $J(R)$, or $R$ is strongly nil-clean.
\end{lemma}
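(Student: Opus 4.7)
The direction (2)$\Rightarrow$(1) is immediate: a local ring with nil Jacobson radical is GNC by Proposition~\ref{local}, and a strongly nil-clean ring is nil-clean, hence GNC. For the converse, suppose $R$ is NR and GNC. By Proposition~\ref{factor}(2) the ideal $J(R)$ is nil. If $Id(R)=\{0,1\}$, Proposition~\ref{local} directly yields that $R$ is local with nil Jacobson radical, so I only need to treat the case in which $R$ admits a non-trivial idempotent $e$, and here the goal is to conclude that $R$ is strongly nil-clean.

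The plan is to pass to $\bar R := R/J(R)$ and show that $\bar R$ is Boolean; combined with $J(R)$ being nil, the characterization of strongly nil-clean rings recorded in \cite{DL,kosan1} then forces $R$ to be strongly nil-clean. Note first that $\bar R$ is GNC by Proposition~\ref{factor}(1), satisfies $J(\bar R)=0$, and is still NR, since the quotient map sends the subring $Nil(R)$ onto $Nil(\bar R)$ (using $J(R)\subseteq Nil(R)$). Also $\bar e$ remains a non-trivial idempotent in $\bar R$, because an element that is simultaneously idempotent and nilpotent must vanish. Once $\bar R$ is known to be reduced, it is abelian and Lemma~\ref{abelian} applies: the local alternative, together with $J(\bar R)=0$, collapses to ``$\bar R$ is a division ring'', which is excluded by the non-trivial $\bar e$; so $\bar R$ is strongly nil-clean, and a reduced strongly nil-clean ring is Boolean.

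Thus the whole argument reduces to proving that the semi-primitive NR GNC ring $\bar R$ is reduced, and the plan is to do this by first upgrading NR to NI. Take $\bar q\in Nil(\bar R)$ and $\bar r\in \bar R$. If $\bar q\bar r$ were a unit, $\bar q$ would acquire a one-sided inverse, contradicting its nilpotency; so $\bar q\bar r\notin U(\bar R)$, and GNC gives a decomposition $\bar q\bar r=\bar f+\bar p$ with $\bar f\in Id(\bar R)$ and $\bar p\in Nil(\bar R)$. Multiplying on the right by a power $\bar q^{\,k}$ with $\bar q^{\,k+1}=0$, and using NR to keep the nilpotent contributions inside $Nil(\bar R)$, the plan is to squeeze $\bar f=0$, whence $\bar q\bar r\in Nil(\bar R)$; a symmetric argument treats $\bar r\bar q$. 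Then $Nil(\bar R)$ is a nil two-sided ideal of the semi-primitive ring $\bar R$, so $Nil(\bar R)\subseteq J(\bar R)=0$, and $\bar R$ is reduced as desired.

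The main obstacle is precisely the last extraction, namely forcing $\bar f=0$ out of $\bar f\bar q^{\,k}\in Nil(\bar R)$ together with $\bar f^{\,2}=\bar f$. The delicate point is that NR closes $Nil(\bar R)$ under its internal operations but does not automatically permit outside multiplication by $\bar R$; so the interaction of $\bar f=\bar q\bar r-\bar p$ with powers of $\bar q$, combined with $J(\bar R)=0$, must be exploited with care. Once this step is in hand, every other piece in the chain is routine.
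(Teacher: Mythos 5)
Your overall skeleton is the same as the paper's: pass to $\bar R=R/J(R)$, show it is abelian (indeed reduced), invoke Lemma~\ref{abelian}, and lift back using the characterization of strongly nil-clean rings. The difference is at the one step that actually carries the weight, and there your argument has a genuine gap that you yourself flag as ``the main obstacle'': you never prove that $Nil(\bar R)$ is an ideal (equivalently, that $\bar R$ is reduced/abelian). The paper does not attempt to prove this from scratch; it observes that a GNC ring is clean, hence exchange, and then cites \cite[Corollary~2.17]{chenlin}, which says that for an exchange NR ring the quotient by the Jacobson radical is abelian, after which Lemma~\ref{abelian} and \cite[Theorem~2.5]{chenshm} finish the job. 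Without that external input (or a complete replacement for it), your proof is incomplete.

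Concretely, the extraction you sketch does not go through. From $\bar q\bar r=\bar f+\bar p$ with $\bar q^{\,k+1}=0$, right-multiplication by $\bar q^{\,k}$ gives $\bar q\bar r\bar q^{\,k}=\bar f\bar q^{\,k}+\bar p\bar q^{\,k}$; the term $\bar p\bar q^{\,k}$ lies in $Nil(\bar R)$ by NR, but the left-hand side $\bar q\bar r\bar q^{\,k}$ is not known to be nilpotent --- that is essentially the statement you are trying to prove --- so you cannot conclude anything about $\bar f\bar q^{\,k}$. Left-multiplication by $\bar q^{\,k}$ does yield $\bar q^{\,k}\bar f=-\bar q^{\,k}\bar p\in Nil(\bar R)$, but since $\bar f$ and $\bar q$ need not commute, no amount of information about $\bar q^{\,k}\bar f$ or $\bar f\bar q^{\,k}$ being nilpotent forces $\bar f=0$. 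Everything else in your write-up (the $(2)\Rightarrow(1)$ direction, the reduction to the case of a non-trivial idempotent, the survival of $\bar e$ modulo the nil radical $J(R)$, and the final lifting) is fine, but the proof stands or falls on this missing step; as written, it falls. Either cite the exchange-ring result from \cite{chenlin} as the paper does, or supply a genuinely complete argument for the abelian-ness of $\bar R$.
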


\begin{proof}
The implication (2) $\Rightarrow$ (1) is elementary.

(1) $\Rightarrow$ (2). Assume $R$ is a GNC ring. Thus, by Lemma \ref{clean}, $R$ is an exchange ring. Therefore, thanks to \cite[Corollary 2.17]{chenlin}, $R/J(R)$ is an abelian ring. Also, Theorem \ref{factor}(2) employs $R/J(R)$ is a GNC ring. Hence, Lemma \ref{abelian} helps to deduce that either $R/J(R)$ is local or strongly nil-clean ring. Finally, \cite[Theorem 2.5]{chenshm} is in hand to conclude that either $R$ is local or strongly nil-clean, as required.
\end{proof}

The following consequence is somewhat a little surprising.

\begin{corollary}
Let $R$ be a GNC ring. Then, $R$ is an NR ring if, and only if, $R$ is an NI ring. In particular, if $R$ is a GNC ring and is an NR ring, then $J(R) = Nil(R)$.
\end{corollary}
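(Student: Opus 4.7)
My plan is to reduce everything to showing that, in a GNC ring which is NR, one actually has $Nil(R)=J(R)$; this is a two-in-one payoff because $J(R)$ is always an ideal, so NR immediately upgrades to NI, and the ``in particular'' assertion drops out at the same time. The reverse implication NI $\Rightarrow$ NR is free, since every ideal is in particular closed under addition and multiplication, and the set $Nil(R)$ is the ideal in question.

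For the nontrivial direction NR $\Rightarrow$ NI, I would invoke Lemma~\ref{NR}, which tells us that a GNC ring that is NR falls into exactly one of two shapes: either $R$ is local with $J(R)$ nil, or $R$ is strongly nil-clean. I would then dispatch the two cases separately. In the local case, every non-unit lies in $J(R)$, and in particular every nilpotent lies in $J(R)$, so $Nil(R)\subseteq J(R)$; the reverse inclusion is immediate from the hypothesis that $J(R)$ is nil. Hence $Nil(R)=J(R)$, which is of course an ideal.

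In the strongly nil-clean case, the standard characterization (cited earlier in the excerpt via \cite{DL} and \cite{kosan1}) gives that $R/J(R)$ is Boolean and $J(R)$ is nil. Since a Boolean ring is reduced, any $x\in Nil(R)$ becomes a nilpotent in $R/J(R)$ and must therefore vanish there, forcing $x\in J(R)$; combined with $J(R)\subseteq Nil(R)$ this again produces $Nil(R)=J(R)$. In both cases $Nil(R)$ coincides with the ideal $J(R)$, so $R$ is NI and the ``in particular'' claim is established simultaneously.

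I do not anticipate a genuine obstacle here, since Lemma~\ref{NR} does the heavy lifting and reduces the problem to two rather transparent regimes; the only mild subtlety is making sure one remembers that ``$J(R)$ nil'' gives $J(R)\subseteq Nil(R)$ automatically so that the two inclusions close up cleanly in each case.
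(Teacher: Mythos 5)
Your proof is correct and follows exactly the intended route: the paper states this as an unproved corollary of Lemma~\ref{NR}, and your case analysis (local with nil $J(R)$, or strongly nil-clean with $R/J(R)$ Boolean) cleanly yields $Nil(R)=J(R)$ in both regimes, from which NI and the ``in particular'' claim follow at once.
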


We now begin by establishing some structural results presented in the sequel.

\begin{lemma}
Let $R$ be a ring. Then, $R$ is strongly nil-clean if, and only if, $R$ is both UU and GNC.
\end{lemma}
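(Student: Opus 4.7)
The plan is to handle the two directions separately. For the ``only if'' direction, assuming $R$ is strongly nil-clean, I note that every element is nil-clean, so every non-unit lies in $Id(R)+Nil(R)$, which already gives $R$ GNC. To verify UU, I would take a unit $u\in U(R)$ with a strong decomposition $u=e+q$, $eq=qe$: since $u$ commutes with $q$, so does $u^{-1}$, and $u^{-1}q$ is nilpotent; hence $1-u^{-1}q\in U(R)$, so $e=u-q=u(1-u^{-1}q)$ is a unit. Combining $e\in U(R)$ with $e\in Id(R)$ forces $e=1$, and then $u=1+q\in 1+Nil(R)$.

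For the converse, assume $R$ is UU and GNC. Each unit then equals $1+q$ for some $q\in Nil(R)$ (a strong nil-clean decomposition in which $1$ commutes with $q$), and each non-unit is nil-clean by GNC, so every element of $R$ is nil-clean. Lemma~\ref{J(R) nil} gives $J(R)$ nil. I would then pass to $\bar R:=R/J(R)$: Proposition~\ref{factor}(2) yields $\bar R$ GNC, and a routine lifting argument (units of $\bar R$ lift to units of $R$, and nilpotents lift because $J(R)$ is nil) shows $\bar R$ is UU as well. Hence $\bar R$ is a nil-clean UU ring with $J(\bar R)=0$.

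The remaining task is to prove $\bar R$ is Boolean; combined with $J(R)$ being nil, this is the standard characterization of strongly nil-clean rings (see~\cite{DL,kosan1}), which will complete the proof. Since $\bar R$ is nil-clean with trivial Jacobson radical, the structure theory of semiprimitive nil-clean rings (using that nil-clean rings are strongly $\pi$-regular, cf.~\cite{diesl}) represents $\bar R$ as a subdirect product of matrix rings $M_{n_i}(\mathbb{Z}_2)$. Each such factor inherits GNC (Corollary~\ref{image}) and UU; Lemma~\ref{div GNC} forces the coefficient ring to be $\mathbb{Z}_2$, and a direct computation in $M_n(\mathbb{Z}_2)$ (for $n\ge 2$, one exhibits a unit whose difference from the identity is not nilpotent, e.g.\ $\begin{pmatrix} 1 & 1 \\ 1 & 0 \end{pmatrix}$) forces $n_i=1$. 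Thus $\bar R$ is a subdirect product of copies of $\mathbb{Z}_2$, hence Boolean. The main obstacle is precisely this structural step, where UU is leveraged to eliminate matrix factors of size greater than one in the Jacobson quotient.
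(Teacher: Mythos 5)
The ``only if'' direction and the opening reductions of your converse are fine: a UU$+$GNC ring is indeed nil-clean (units are $1+q$, non-units are nil-clean by GNC), $J(R)$ is nil, and $\bar R=R/J(R)$ is nil-clean and UU. The genuine gap is the structural step you then invoke: that a semiprimitive nil-clean ring is a subdirect product of matrix rings $M_{n_i}(\mathbb{Z}_2)$, ``using that nil-clean rings are strongly $\pi$-regular, cf.~\cite{diesl}''. Diesl does \emph{not} prove that nil-clean rings are strongly $\pi$-regular --- this is posed there as an open question, not a theorem --- and no cited source gives the subdirect decomposition into \emph{full} matrix rings over $\mathbb{Z}_2$ (a primitive nil-clean ring is a dense subring of some ${\rm End}(V_D)$, and ruling out the infinite-dimensional case is exactly the unproved boundedness you would need). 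A secondary issue: even granting the decomposition, you assert each subdirect factor ``inherits UU''; but units need not lift along an arbitrary surjection, so UU is not obviously preserved by homomorphic images, and this step also needs an argument.

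Both problems disappear if you exploit UU structurally rather than the (unknown) structure of nil-clean rings: by \cite[Corollary 2.13]{ster} a UU ring is NR, so Lemma~\ref{NR} applies directly and gives that $R$ is either local with nil $J(R)$ or strongly nil-clean; in the local case every unit is $1+q$ with $q$ nilpotent (hence strongly nil-clean) and every non-unit lies in $J(R)\subseteq Nil(R)$ (hence strongly nil-clean), which is exactly the paper's argument. Alternatively, staying closer to your reduction: $\bar R$ is clean and NR, so by \cite[Corollary 2.17]{chenlin} it is abelian; an abelian nil-clean ring is strongly nil-clean (Lemma~\ref{abelian}), and a semiprimitive strongly nil-clean ring is Boolean, which completes your intended route without any appeal to strong $\pi$-regularity or to matrix factors.
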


\begin{proof}
If $R$ is strongly nil-clean, then one elementarily sees that $R$ is both a UU-ring and a GNC-ring. Therefore, it is sufficient to prove the converse.

To do that, let us assume that $R$ is simultaneously UU and GNC. According to \cite[Corollary 2.13]{ster}, $R$ is an NR ring. Thus, owing to Lemma \ref{NR}, $R$ is either local with nil $J(R)$ or $R$ is a strongly nil-clean ring.

Now, let us suppose that $R$ is a local with nil $J(R)$. If $a \in U(R)$, then $a$ is strongly nil-clean, because $R$ is an UU-ring. On the other side, if $a \notin U(R)$, we have $a \in J(R) \subseteq Nil(R)$, giving that $a$ is strongly nil-clean, as needed.
\end{proof}

\begin{proposition} \label{matrix and NR}
Let $R$ be an NR ring and $n \ge 2$. Then, $M_n(R)$ is GNC if, and only if, $R/J(R)$ is Boolean and $J(M_n(R))$ is nil.
\end{proposition}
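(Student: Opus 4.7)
The plan is to mirror the structure of the proof of Proposition \ref{matrix and 2-primal}, replacing the reduced-quotient argument (which rests on $2$-primality) with an abelian-quotient argument appropriate to the weaker NR hypothesis.

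For the backward implication, I would first pass from $J(M_n(R))$ nil to $J(R)$ nil via the identification $J(M_n(R)) = M_n(J(R))$, and then invoke \cite[Theorem 6.1]{kosan1} to conclude that $M_n(R)$ is nil-clean, hence GNC.

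For the forward implication, I would proceed in four steps. (i) Apply Lemma \ref{J(R) nil} to $M_n(R)$ to see that $J(M_n(R))$, and hence $J(R)$, is nil. (ii) Observe that $M_n(R)$ is clean by Corollary \ref{clean}, so exchange, whence $R$ itself is exchange by \cite[Proposition 1.10]{nic}. (iii) Since $R$ is NR and exchange, \cite[Corollary 2.17]{chenlin} yields that $S := R/J(R)$ is abelian (and obviously exchange with $J(S)=0$). (iv) Write $S$ as a subdirect product of its simple quotients $S/M_i$ as $M_i$ ranges over the maximal ideals of $S$ (using $\bigcap M_i = J(S) = 0$). Since abelian exchange rings are clean and idempotents lift modulo every ideal, each $S/M_i$ is simple and abelian, hence has only trivial idempotents, so by Nicholson's theorem it is local; a simple local ring is a division ring $D_i$. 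As $M_n(D_i)$ is a homomorphic image of $M_n(R)$, Corollary \ref{image} makes it GNC, whereupon Lemma \ref{div GNC} forces $D_i \cong \mathbb{Z}_2$. Therefore $S$ embeds in $\prod_i \mathbb{Z}_2$, so $R/J(R)$ is Boolean, as required.

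The main obstacle will be step (iv), specifically tracking that the abelian property descends to each simple quotient $S/M_i$: this requires invoking that $S$ is clean in order to lift idempotents from $S/M_i$ back to $S$, after which centrality of the lifts in $S$ forces centrality of idempotents in $S/M_i$. Once this is in hand, the identification $S/M_i \cong \mathbb{Z}_2$ is a clean application of the already-established Lemma \ref{div GNC}.
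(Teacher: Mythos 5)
Your forward direction is a genuinely different route from the paper's. The paper notes that $R/J(R)$ is exchange and, because $J(R)$ is nil, still NR, and then invokes \cite[Proposition 2.19]{chenlin} to conclude that $R/J(R)$ is \emph{reduced}, hence $2$-primal, so that the whole statement reduces to Proposition~\ref{matrix and 2-primal}. You instead extract only abelian-ness of $R/J(R)$ from \cite[Corollary 2.17]{chenlin} and then re-run a subdirect-product decomposition from scratch, landing on simple quotients rather than on the domains used inside the proof of Proposition~\ref{matrix and 2-primal}. Both routes funnel through Lemma~\ref{div GNC}; yours is self-contained and avoids the $2$-primal proposition, at the cost of redoing the decomposition argument.

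There is one step you must repair. For a noncommutative ring $S$, the equality $\bigcap M_i = J(S)$ over the \emph{maximal two-sided} ideals $M_i$ is false in general: that intersection is the Brown--McCoy radical, which can properly contain $J(S)$ even when $J(S)=0$ (for instance, the endomorphism ring of an infinite-dimensional vector space is semiprimitive but has a unique maximal two-sided ideal, which is nonzero). So the subdirect product representation of $S=R/J(R)$ by its simple quotients is not automatic from $J(S)=0$. The fix stays entirely within your toolkit: index over the \emph{primitive} ideals $P$, whose intersection genuinely is $J(S)=0$. Each $S/P$ is a prime exchange ring; since $S$ is exchange, idempotents of $S/P$ lift to idempotents of $S$, which are central because $S$ is abelian, so $S/P$ is abelian; a prime ring with central idempotents has only trivial idempotents, so $S/P$ is local, and being primitive it is semiprimitive, hence a division ring. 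In particular every primitive ideal of $S$ is maximal, which retroactively legitimizes your phrasing, and the rest of your step (iv) goes through verbatim. One smaller caution on the backward direction: if \cite[Theorem 6.1]{kosan1} carries a $2$-primality hypothesis (it is the $(\Leftarrow)$ tool of Proposition~\ref{matrix and 2-primal}), it may not apply to an arbitrary NR ring; the hypothesis that $J(M_n(R))$ is nil lets you argue directly, as the paper does, via \cite[Corollary 6]{BCDM} (so $M_n(R)/J(M_n(R))\cong M_n(R/J(R))$ is nil-clean because $R/J(R)$ is Boolean) followed by Proposition~\ref{factor}(1).
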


\begin{proof}
$(\Leftarrow).$  Since \( R/J(R) \) is a Boolean ring, consulting with \cite[Corollary 6]{BCDM}, the factor
\( M_n(R)/J(M_n(R)) \cong M_n(R/J(R)) \) is a nil-clean ring. And since \( J(M_n(R)) \) is nil, Theorem \ref{factor}(1) yields that \( M_n(R) \) is a GNC ring.

$(\Rightarrow).$ Since \( M_n(R) \) is a GNC ring, by Lemma \ref{J(R) nil}, \( J(M_n(R))=M_n(J(R)) \) is nil as well. Also, \( M_n(R/J(R)) \) is GNC. Therefore, \( R/J(R) \) is an exchange ring. And since \( J(R) \) is nil, we conclude that \( R/J(R) \) is also an NR ring. Consequently, handling \cite[Proposition 2.19]{chenlin}, \( R/J(R) \) is reduced and hence $2$-primal. Thus, Proposition \ref{matrix and 2-primal} guarantees that \( R/J(R) \) is Boolean, ending the conclusion.
\end{proof}

\begin{proposition} \label{matrix and abelian}
Let $R$ be an abelian ring and $n \ge 2$. Then, $M_n(R)$ is GNC if, and only if, $R/J(R)$ is Boolean and $J(M_n(R))$ is nil.
\end{proposition}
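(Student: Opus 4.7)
My plan is to mirror the structure of the proof of Proposition~\ref{matrix and NR}, but to exploit the abelian hypothesis by splitting $R$ along its central idempotents rather than invoking a ``reducedness'' result for $R/J(R)$. The backward direction is identical to the corresponding direction in Proposition~\ref{matrix and NR}: from $R/J(R)$ Boolean one obtains, through \cite[Corollary~6]{BCDM}, that $M_n(R)/J(M_n(R)) \cong M_n(R/J(R))$ is nil-clean, and since $J(M_n(R))$ is assumed nil, Theorem~\ref{factor}(1) lifts this to the conclusion that $M_n(R)$ is GNC.

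For the forward direction, assume $M_n(R)$ is GNC. Lemma~\ref{J(R) nil} immediately yields that $J(M_n(R)) = M_n(J(R))$ is nil, so $J(R)$ is nil and half of the conclusion is in hand. To prove $R/J(R)$ is Boolean I would dichotomize on the idempotents of $R$. Being GNC makes $M_n(R)$ clean by Corollary~\ref{clean}, and so the corner $R \cong e_{11}M_n(R)e_{11}$ inherits cleanness. If $R$ has only the trivial idempotents, then a clean indecomposable ring is local, so $R/J(R)$ is a division ring $D$; applying Corollary~\ref{image} to the projection $M_n(R) \twoheadrightarrow M_n(D)$ shows that $M_n(D)$ is GNC, and Lemma~\ref{div GNC} then forces $D \cong \mathbb{Z}_2$, so $R/J(R)$ is Boolean.

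Otherwise $R$ admits a non-trivial idempotent $e$, necessarily central by the abelian hypothesis, so $R = eR \times (1-e)R$ and correspondingly $M_n(R) \cong M_n(eR) \times M_n((1-e)R)$. Proposition~\ref{factor}(3), applied to this two-factor product, shows that both $M_n(eR)$ and $M_n((1-e)R)$ are nil-clean (stronger than GNC). Since nil-cleanness is inherited by corner rings (cf.\ \cite[Corollary~3.17]{diesl}), the corners $eR \cong E_{11}M_n(eR)E_{11}$ and $(1-e)R$ are themselves nil-clean; being summands of an abelian ring they are abelian, and any abelian nil-clean ring is automatically strongly nil-clean because the idempotent in the decomposition is central and hence commutes with the nilpotent part. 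Thus, by \cite[Theorem~2.5]{chenshm}, each of $eR/J(eR)$ and $(1-e)R/J((1-e)R)$ is Boolean, and the isomorphism $R/J(R) \cong eR/J(eR) \times (1-e)R/J((1-e)R)$ delivers that $R/J(R)$ is Boolean. The principal obstacle lies in the decomposable case and consists of the two transfer facts used there: that nil-cleanness descends to corner rings, and that abelian nil-clean rings are strongly nil-clean; both are standard, but invoking them with the correct references is essential for the argument.
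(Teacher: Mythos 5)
Your backward direction and the proof that $J(M_n(R))=M_n(J(R))$ is nil coincide with the paper's. For the Boolean part you take a genuinely different route — the paper shows that $R/J(R)$ is abelian and exchange, hence reduced and $2$-primal, and then invokes Proposition~\ref{matrix and 2-primal} — but your route has a genuine gap in the decomposable case. The load-bearing step there is the assertion that nil-cleanness descends to corner rings, applied to $eR \cong e_{11}M_n(eR)e_{11}$. This is not among the facts from \cite{diesl} used anywhere in this paper; it is exactly the corner half of Diesl's Morita-invariance problem for nil-clean rings (the matrix half of which the paper explicitly records as open), and the authors' own proofs of Lemma~\ref{div GNC} and Proposition~\ref{matrix and 2-primal} are organized precisely to avoid it, via the element-wise Lemma~\ref{div} and a subdirect-product-of-domains argument. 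Unless you can supply a proof or a verified reference for corner descent of nil-cleanness, the conclusion that $eR$ and $(1-e)R$ are nil-clean does not follow, and the decomposable case collapses. A repair that stays inside the paper's toolkit: from $M_n(R/J(R))$ GNC deduce that $R/J(R)$ is exchange (corners of exchange rings are exchange, \cite[Proposition 1.10]{nic}) and abelian, hence reduced by \cite[Proposition 2.19]{chenlin}; then $R/J(R)$ is a subdirect product of domains $S_i$, each $M_n(S_i)$ is a GNC homomorphic image, so each $S_i$ is an exchange domain, hence a division ring by Lemma~\ref{domain} and then $\mathbb{Z}_2$ by Lemma~\ref{div GNC}. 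That is, in effect, the paper's argument.

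A second, smaller issue: in the indecomposable case you assert that the corner $R \cong e_{11}M_n(R)e_{11}$ ``inherits cleanness.'' Corners of clean rings need not be clean (\v{S}ter's counterexample to the corner problem for clean rings), so this step is unjustified as stated. It is easily repaired: $M_n(R)$ clean implies $M_n(R)$ exchange, corners of exchange rings are exchange, and an exchange ring with only trivial idempotents is local; the remainder of that case (passing to $M_n(R/J(R))$ and invoking Lemma~\ref{div GNC}) is correct.
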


\begin{proof}
$(\Leftarrow).$ Since \( R/J(R) \) is a Boolean ring, in conjunction with \cite[Corollary 6]{BCDM}, \( M_n(R)/J(M_n(R)) \cong M_n(R/J(R)) \) is a nil-clean ring. And since \( J(M_n(R)) \) is nil, Theorem \ref{factor}(1) implies that \( M_n(R) \) is a GNC ring.

$(\Rightarrow).$ Since \( M_n(R) \) is a GNC ring, in conjunction with Lemma \ref{J(R) nil}, \( J(M_n(R))=M_n(J(R)) \) is nil. Also, \( M_n(R/J(R)) \) is GNC. Therefore, \( R/J(R) \) is an exchange ring. On the other hand, we have from \cite[Corollary 2.5]{CDJ} that \( R/J(R) \) is abelian. Therefore, in view of \cite[Proposition 2.19]{chenlin}, \( R/J(R) \) is reduced and hence $2$-primal. Thus, the utilization of Proposition \ref{matrix and 2-primal} forces that \( R/J(R) \) is Boolean.
\end{proof}

\begin{corollary}
Let $R$ be a local ring and $n \ge 2$. Then, $M_n(R)$ is GNC if, and only if, $R/J(R) \cong \mathbb{Z}_2$ and $J(M_n(R))$ is nil.
\end{corollary}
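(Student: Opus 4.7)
The plan is to derive this corollary directly from Proposition~\ref{matrix and abelian}, which already handles the abelian case, together with two standard facts about local rings: a local ring is abelian (its only idempotents being $0$ and $1$, which are trivially central), and the factor $R/J(R)$ of a local ring is a division ring.

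For the forward direction, assume $M_n(R)$ is GNC. Since $R$ is local, it is abelian, so Proposition~\ref{matrix and abelian} applies and gives that $R/J(R)$ is Boolean and $J(M_n(R))$ is nil. But $R/J(R)$ is a division ring because $R$ is local, and the only Boolean division ring is $\mathbb{Z}_2$ (any nonzero element $a$ satisfies $a = a^2$, hence $a = 1$ after multiplying by $a^{-1}$). Thus $R/J(R) \cong \mathbb{Z}_2$, as required.

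For the reverse direction, assume $R/J(R) \cong \mathbb{Z}_2$ and $J(M_n(R))$ is nil. Then $R/J(R)$ is Boolean, and since $R$ is abelian (being local), Proposition~\ref{matrix and abelian} immediately yields that $M_n(R)$ is GNC.

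There is no genuine obstacle here; the statement is essentially a specialization of Proposition~\ref{matrix and abelian} to the local setting. The only small point worth making explicit is the observation that a Boolean division ring must be $\mathbb{Z}_2$, which reduces the Boolean condition to the isomorphism $R/J(R) \cong \mathbb{Z}_2$.
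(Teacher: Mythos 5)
Your proof is correct, but it takes a different route from the paper's. The paper argues directly: for the forward direction it notes that $M_n(R)/J(M_n(R)) \cong M_n(R/J(R))$ is a homomorphic image of a GNC ring and hence GNC, that $R/J(R)$ is a division ring since $R$ is local, and then applies Lemma~\ref{div GNC} to get $R/J(R)\cong\mathbb{Z}_2$ (nilness of $J(M_n(R))$ coming from Lemma~\ref{J(R) nil}); the converse is declared immediate. You instead observe that a local ring is abelian (its only idempotents are $0$ and $1$), invoke the immediately preceding Proposition~\ref{matrix and abelian} in both directions, and then translate ``$R/J(R)$ Boolean'' into $R/J(R)\cong\mathbb{Z}_2$ via the observation that a Boolean division ring is $\mathbb{Z}_2$. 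Both arguments are sound and there is no circularity, since Proposition~\ref{matrix and abelian} does not depend on this corollary. The trade-off: your version makes the statement literally a specialization of the preceding proposition, which is tidy, but it silently imports the heavier machinery behind Proposition~\ref{matrix and abelian} (exchange rings, reduced quotients, subdirect products of domains); the paper's direct appeal to Lemma~\ref{div GNC} on the division-ring quotient is shorter and more elementary.
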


\begin{proof}
$(\Leftarrow).$ Immediate.

$(\Rightarrow).$  It is enough to demonstrate only that \( R/J(R) \cong \mathbb{Z}_2 \). In fact, since \( M_n(R) \) is a GNC ring, we have that \( M_n(R/J(R)) \cong M_n(R)/J(M_n(R)) \) is also a GNC ring. And since \( R \) is local, \( R/J(R) \) is a division ring. Therefore, Lemma \ref{div GNC} insures that \( R/J(R) \cong \mathbb{Z}_2 \).
\end{proof}

The following technical claim is also of some usefulness.

\begin{lemma}\label{sum subring and nil ideal}
Let $R$ be a ring such that $R = S + K$, where $S$ is a subring of $R$ and $K$ is a nil-ideal of $R$. Then, $S$ is GNC if, and only if, $R$ is GNC.
\end{lemma}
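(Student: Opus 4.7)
The plan is to reduce the statement to Proposition~\ref{factor}(1) via the second isomorphism theorem. The key observation is that $K$ being a nil-ideal of $R$ makes $S \cap K$ behave well inside $S$: for any $s \in S$ and any $x \in S \cap K$, the product $sx$ lies in $S$ (since $S$ is a subring) and in $K$ (since $K$ is an ideal of $R$), so $S \cap K$ is a two-sided ideal of $S$; moreover every element of $S \cap K$ is nilpotent, so $S \cap K$ is a nil-ideal of $S$.

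Next, I would invoke the second isomorphism theorem in its ring-theoretic form: since $R = S + K$ and $K$ is an ideal of $R$, the natural map $S \to R/K$ given by $s \mapsto s + K$ is a surjective ring homomorphism with kernel $S \cap K$, so
\[
R/K \;=\; (S+K)/K \;\cong\; S/(S \cap K).
\]

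With these two facts in place, the result follows by chaining Proposition~\ref{factor}(1) on both sides. Applied to $R$ with the nil-ideal $K$, it gives that $R$ is GNC iff $R/K$ is GNC. Applied to $S$ with the nil-ideal $S \cap K$, it gives that $S$ is GNC iff $S/(S \cap K)$ is GNC. Since $R/K \cong S/(S \cap K)$, the two conditions coincide, which yields the desired equivalence.

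I do not expect any real obstacle here; the only step that needs a line of verification is that $S \cap K$ is indeed a (two-sided) nil-ideal of $S$, which is routine from the assumptions. Everything else is a direct application of already-proven results in the paper.
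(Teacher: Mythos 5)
Your proposal is correct and follows essentially the same route as the paper: identify $S\cap K$ as a nil-ideal of $S$, use the second isomorphism theorem to get $R/K\cong S/(S\cap K)$, and apply Proposition~\ref{factor}(1) on both sides. The only difference is that you spell out the routine verification that $S\cap K$ is a two-sided nil-ideal of $S$, which the paper leaves implicit.
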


\begin{proof}
Clearly, \( S \cap K \subseteq K \) is a nil-ideal of \( S \). Also, we routinely can write that \( R/K = (S+K)/K \cong S/(S \cap K) \). Therefore, Theorem \ref{factor} is applicable to get the desired result.
\end{proof}

Further, let $A, B$ be two rings, and let $M,N$ be the $(A,B)$-bi-module and $(B,A)$-bi-module, respectively. Also, we consider the bilinear maps $\phi : M\otimes_B N \to A$ and $\psi : N\otimes_AM \to B$ that apply to the following properties
$${\rm Id}_M \otimes_B \psi = \phi \otimes_A {\rm Id}_M, \quad {\rm Id}_N \otimes_A \phi = \psi \otimes_B {\rm Id}_N.$$
For $m \in M$ and $n \in N$, we define $mn := \phi(m \otimes n)$ and $nm := \psi(n \otimes m)$.
Thus, the $4$-tuple
$R= \begin{pmatrix}
	A & M \\
	N & B
\end{pmatrix}$
becomes to an associative ring equipped with the obvious matrix operations, which is called a {\it Morita context ring}. Denote the two-sided ideals ${\rm Im}\phi$ and ${\rm Im}\psi$ to $MN$ and $NM$, respectively, that are called the {\it trace ideals} of the Morita context.

\medskip

We now have all the ingredients needed to prove the following.

\begin{theorem}\label{morita}
Let $R=$$A~M \choose N~B$ be a Morita context such that $MN$ and $NM$ are nilpotent ideals of $A$ and $B$, respectively. Then, $R$ is GNC if, and only if, both $A$ and $B$ are nil-clean.
\end{theorem}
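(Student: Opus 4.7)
The strategy is to exhibit a nilpotent ideal $I$ of $R$ such that $R/I \cong A/MN \times B/NM$, and then transfer the GNC / nil-clean condition across $I$ using the reduction results already established in the paper together with the standard fact that nil-cleanness is preserved in both directions by nil ideals.

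I would take
$$I = \begin{pmatrix} MN & M \\ N & NM \end{pmatrix} \subseteq R,$$
which is readily checked to be a two-sided ideal of $R$. The map $\begin{pmatrix} a & m \\ n & b \end{pmatrix} \mapsto (a + MN,\, b + NM)$ is then a surjective ring homomorphism onto $A/MN \times B/NM$: off-diagonal entries are killed, and in a product of two elements of $R$ the cross terms $mn'$ and $nm'$ lie in $MN$ and $NM$ respectively (by the very definition of $\phi$ and $\psi$), so the diagonal entries reduce correctly. The kernel is exactly $I$, giving $R/I \cong A/MN \times B/NM$.

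The main obstacle is verifying that $I$ is nilpotent. For this I would prove by induction on $r$ the sharpened entry-wise bounds
$$(I^r)_{11} \subseteq (MN)^{\lceil r/2 \rceil}, \quad (I^r)_{12} \subseteq (MN)^{\lfloor r/2 \rfloor} M,$$
$$(I^r)_{21} \subseteq (NM)^{\lfloor r/2 \rfloor} N, \quad (I^r)_{22} \subseteq (NM)^{\lceil r/2 \rceil},$$
the inductive step being a direct computation from the block-matrix product, distinguishing the parities of $r$, and using the compatibility of $\phi$ and $\psi$ to rewrite $M \cdot N$ and $N \cdot M$ as factors of $MN$ and $NM$. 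Choosing $k$ with $(MN)^k = (NM)^k = 0$ and setting $r = 2k$ then gives $I^{2k} = 0$, so $I$ is a nil (in fact nilpotent) ideal of $R$.

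With this in hand the equivalence is short. If $A$ and $B$ are nil-clean, then so are $A/MN$ and $B/NM$ as homomorphic images, hence $R/I \cong A/MN \times B/NM$ is nil-clean by \cite[Proposition 3.13]{diesl}, and in particular GNC; Theorem \ref{factor}(1) then lifts this to $R$. Conversely, if $R$ is GNC then so is $R/I$ by Corollary \ref{image}, so Theorem \ref{factor}(3) forces both $A/MN$ and $B/NM$ to be nil-clean; since $MN$ and $NM$ are nil, the standard lifting of nil-cleanness across a nil ideal (via idempotent lifting, cf. \cite{diesl}) yields that $A$ and $B$ themselves are nil-clean, completing the equivalence.
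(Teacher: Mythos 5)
Your argument is correct, and it is genuinely organized differently from the paper's, though the two share their key ingredient. For the ``if'' direction you use exactly the same nilpotent ideal as the paper, namely
$K=\begin{pmatrix} MN & M \\ N & NM \end{pmatrix}$,
with the same entrywise nilpotency estimate (the paper records $K^{2l}\subseteq\begin{pmatrix} (MN)^l & (MN)^lM \\ (NM)^lN & (NM)^l \end{pmatrix}$, which is your bound at even $r$); the only cosmetic difference is that you quotient by $K$ and apply Proposition~\ref{factor}(1) to $R/K\cong A/MN\times B/NM$, while the paper writes $R=S+K$ with $S=A\times B$ and invokes Lemma~\ref{sum subring and nil ideal}, which reduces to the same thing. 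The real divergence is in the ``only if'' direction: the paper imports the formula $J(R)=\begin{pmatrix} J(A) & M \\ N & J(B) \end{pmatrix}$ from \cite{tangs} and works with $R/J(R)\cong A/J(A)\times B/J(B)$, whereas you reuse the single nilpotent ideal $K$, pass to $R/K\cong A/MN\times B/NM$ via Corollary~\ref{image} and Proposition~\ref{factor}(3), and then lift nil-cleanness of $A/MN$ and $B/NM$ back to $A$ and $B$ across the nil ideals $MN$ and $NM$. Your route is more self-contained and symmetric (one ideal serves both directions, and no external Jacobson-radical computation is needed); its only extra input is the standard fact that nil-cleanness ascends along nil ideals, which the paper itself already relies on (for $J(A)$ and $J(B)$) and which is in \cite{diesl}. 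The one step I would ask you to write out rather than wave at is the check that $K$ is a two-sided ideal and that the quotient map is multiplicative: this is where the bimodule and associativity axioms of the Morita context (e.g.\ $M(NM)=(MN)M\subseteq M$ and $mn'\in MN$, $nm'\in NM$ for the cross terms) are actually used.
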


\begin{proof}
Apparently, since \( MN \subseteq J(A) \) and \( NM \subseteq J(B) \), by using \cite[Lemma 3.1(1)]{tangs}, we have
\( J(R) = \begin{pmatrix} J(A) & M \\ N & J(B)\end{pmatrix} \)
and \( R/J(R) \cong A/J(A) \times B/J(B) \). Since \( R \) is a GNC ring, a consultation with Theorem \ref{factor}(2) assures that that \( R/J(R) \) is also GNC. Therefore, the exploitation of Theorem \ref{factor}(3), \( A/J(A) \) and \( B/J(B) \) are nil-clean. Moreover, since \( J(R) \) is nil, we infer both \( J(A) \) and \( J(B) \) are also nil. Hence, from \cite[Proposition 3.13]{diesl}, we conclude that \( A \) and \( B \) are nil-clean.

As for the converse, let us assume that \( A \) and \( B \) are nil-clean. We have \( R = S + K \), where
\( S = \begin{pmatrix} A & 0 \\ 0 & B \end{pmatrix} \)
is a subring of \( R \) and
\( K = \begin{pmatrix} MN & M \\ N & NM \end{pmatrix} \)
is a nil-ideal of \( R \) since, by an induction on $l\geq 1$, the equality
\[ K^{2l} = \begin{pmatrix} (MN)^l & (MN)^lM \\ (NM)^lN & (NM)^l \end{pmatrix} \]
is fulfilled for every \( l \in \mathbb{N} \). Furthermore, as \( S = A \times B \), Theorem \ref{factor}(3) ensures that \( S \) is a GNC ring. Therefore, knowing Lemma \ref{sum subring and nil ideal}, we conclude that \( R \) is a GNC ring as well.
\end{proof}

Now, let $R$, $S$ be two rings, and let $M$ be an $(R,S)$-bi-module such that the operation $(rm)s = r(ms$) is valid for all $r \in R$, $m \in M$ and $s \in S$. Given such a bi-module $M$, we can set

$$
{\rm T}(R, S, M) =
\begin{pmatrix}
	R& M \\
	0& S
\end{pmatrix}
=
\left\{
\begin{pmatrix}
	r& m \\
	0& s
\end{pmatrix}
: r \in R, m \in M, s \in S
\right\},
$$
where it forms a ring with the usual matrix operations. The so-stated formal matrix ${\rm T}(R, S, M)$ is called a {\it formal triangular matrix ring}. In Theorem \ref{morita}, if we set $N =\{0\}$, then we will obtain the following two corollaries.

\begin{corollary}\label{cor3.36}
Let $R,S$ be rings and let $M$ be an $(R,S)$-bi-module. Then, the formal triangular matrix ring ${\rm T}(R,S,M)$ is GNC if, and only if, both $A$ and $B$ are nil-clean.
\end{corollary}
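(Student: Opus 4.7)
The plan is to derive this corollary as an immediate specialization of Theorem~\ref{morita}. The formal triangular matrix ring ${\rm T}(R,S,M)$ is, by construction, precisely the Morita context $\begin{pmatrix} R & M \\ N & S \end{pmatrix}$ in the case $N=\{0\}$; here the bilinear maps $\phi$ and $\psi$ are forced to be zero and the compatibility conditions ${\rm Id}_M\otimes\psi=\phi\otimes{\rm Id}_M$ and ${\rm Id}_N\otimes\phi=\psi\otimes{\rm Id}_N$ hold trivially. So the first step is simply to record that ${\rm T}(R,S,M)$ is a legitimate Morita context ring in the sense defined just before Theorem~\ref{morita}.

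The second step is to verify the nilpotency hypothesis of Theorem~\ref{morita}: with $N=\{0\}$ the trace ideals are $MN={\rm Im}\,\phi=\{0\}\subseteq R$ and $NM={\rm Im}\,\psi=\{0\}\subseteq S$, both of which are obviously nilpotent (indeed zero). Hence Theorem~\ref{morita} applies verbatim and gives that ${\rm T}(R,S,M)$ is GNC if and only if both $R$ and $S$ are nil-clean, which is exactly the conclusion (with the understanding that the ``$A$ and $B$'' in the statement is a typographical carry-over from Theorem~\ref{morita} and should read ``$R$ and $S$'').

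There is no real obstacle here, since the only content beyond quoting Theorem~\ref{morita} is checking the triviality of the trace ideal condition; one could alternatively write a direct proof by taking $S = \begin{pmatrix} R & 0 \\ 0 & S\end{pmatrix}$ as a subring of ${\rm T}(R,S,M)$ and $K = \begin{pmatrix} 0 & M \\ 0 & 0\end{pmatrix}$ as a square-zero (hence nil) ideal, and then invoking Lemma~\ref{sum subring and nil ideal} together with Theorem~\ref{factor}(3), but this just recapitulates the proof of Theorem~\ref{morita} in the degenerate case.
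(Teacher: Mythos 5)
Your proposal is correct and matches the paper's own derivation: the paper obtains this corollary precisely by setting $N=\{0\}$ in Theorem~\ref{morita}, so that the trace ideals $MN$ and $NM$ vanish and are trivially nilpotent. Your observation that ``$A$ and $B$'' in the statement should read ``$R$ and $S$'' is also right --- it is a typographical carry-over from Theorem~\ref{morita}.
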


\begin{corollary}\label{triangular matrix}
Let $R$ be a ring and $n\geqslant 1$ is a natural number. Then, ${\rm T}_{n}(R)$ is GNC if, and only if, $R$ is nil-clean.
\end{corollary}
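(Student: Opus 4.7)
The plan is to realise ${\rm T}_n(R)$ as a formal triangular matrix ring and then appeal to Corollary~\ref{cor3.36}, closing the loop with Diesl's classical characterisation of when ${\rm T}_k(R)$ is nil-clean. The case $n=1$ reduces to the tautology ${\rm T}_1(R)=R$ and sits outside the Morita-context framework; the substantive content is for $n\geq 2$, which is what I will treat.

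For $n\geq 2$, I would decompose every element of ${\rm T}_n(R)$ into its $(1,1)$-entry (in $R$), the rest of its top row (in $R^{1\times(n-1)}$), and its lower-right $(n-1)\times(n-1)$ block (in ${\rm T}_{n-1}(R)$). This supplies a ring isomorphism
$${\rm T}_n(R)\;\cong\;{\rm T}\bigl(R,\;{\rm T}_{n-1}(R),\;R^{1\times(n-1)}\bigr),$$
where $R^{1\times(n-1)}$ carries the natural $(R,{\rm T}_{n-1}(R))$-bimodule structure given by scalar multiplication on the left and matrix multiplication on the right, and the compatibility $(rm)s=r(ms)$ is automatic from matrix associativity. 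This places the ring exactly in the setting of Corollary~\ref{cor3.36}, from which one immediately extracts
$${\rm T}_n(R)\ \text{is GNC}\;\Longleftrightarrow\; R\ \text{and}\ {\rm T}_{n-1}(R)\ \text{are both nil-clean}.$$

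To collapse the second condition, I would invoke the theorem of Diesl in \cite{diesl} that ${\rm T}_k(R)$ is nil-clean precisely when $R$ is nil-clean, applied with $k=n-1$; this reduces the right-hand conjunction to the single condition \emph{$R$ is nil-clean}, yielding the desired biconditional. I do not anticipate any substantive obstacle: the only nontrivial external input is Diesl's triangular-matrix nil-clean theorem, which is already part of the paper's toolbox. The balance of the argument is routine bookkeeping to verify that the block decomposition gives a bona fide formal triangular matrix ring in the sense required by Corollary~\ref{cor3.36}, i.e.\ checking the left/right actions and associativity, which is immediate.
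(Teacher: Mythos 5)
Your argument for $n\geq 2$ is correct and is essentially the paper's intended derivation: the paper states this as an immediate consequence of Theorem~\ref{morita} with $N=\{0\}$ (i.e.\ of Corollary~\ref{cor3.36}), and your block decomposition ${\rm T}_n(R)\cong {\rm T}(R,{\rm T}_{n-1}(R),R^{1\times(n-1)})$ together with Diesl's theorem that ${\rm T}_{n-1}(R)$ is nil-clean iff $R$ is nil-clean is exactly the bookkeeping needed to make that implication explicit. One correction, though: the $n=1$ case is not a ``tautology'' --- it asserts that $R$ is GNC iff $R$ is nil-clean, which is false ($\mathbb{Z}_3$ is GNC but not nil-clean, as the paper itself notes); the hypothesis in the statement should read $n\geq 2$, so you were right to treat only that range but wrong to wave the $n=1$ case through.
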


Given now a ring $R$ and a central element $s$ of $R$, the $4$-tuple
$\begin{pmatrix}
	R& R \\
	R& R
\end{pmatrix}$
becomes a ring with addition defined componentwise and with multiplication defined by
$$
\begin{pmatrix}
	a_1& x_1 \\
	y_1& b_1
\end{pmatrix}
\begin{pmatrix}
	a_2& x_2 \\
	y_2& b_2
\end{pmatrix}=
\begin{pmatrix}
	a_1a_2 + sx_1y_2& a_1x_2 + x_1b_2 \\
	y_1a_2 + b_1y_2& sy_1x_2 + b_1b_2
\end{pmatrix}.
$$
This ring is denoted by ${\rm K}_s(R)$. A Morita context
$
\begin{pmatrix}
	A& M \\
	N& B
\end{pmatrix}
$
with $A = B = M = N = R$ is called a {\it generalized matrix ring} over $R$. It was observed in \cite{kry} that a ring $S$ is a generalized matrix ring over $R$ if, and only if, $S = {\rm K}_s(R)$ for some $s \in {\rm Z}(R)$, the center of $R$. Here $MN = NM = sR$, so that $$MN \subseteq J(A) \Longleftrightarrow  s \in J(R), NM \subseteq J(B) \Longleftrightarrow  s \in  J(R),$$ and $MN, NM$ are nilpotent $\Longleftrightarrow  s$ is a nilpotent. Thus, Theorem \ref{morita} has the following consequence, too.

\begin{corollary}\label{generalized matrix ring}
Let $R$ be a ring and $s\in {\rm Z}(R) \cap {\rm Nil}(R)$. Then, ${\rm K}_{s}(R)$ is GNC if, and only if, $R$ is nil-clean.
\end{corollary}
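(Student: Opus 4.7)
The plan is to derive this as a direct application of Theorem~\ref{morita} to the Morita context representation of ${\rm K}_s(R)$. The paper has already flagged that this corollary is supposed to follow from Theorem~\ref{morita}, so the bulk of the work is just verifying that the hypotheses of that theorem are satisfied in the present setting.

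First, I would recall the discussion immediately preceding the statement: ${\rm K}_s(R)$ is precisely the Morita context $\begin{pmatrix} A & M \\ N & B \end{pmatrix}$ with $A = B = M = N = R$, where the bilinear pairings $\phi$ and $\psi$ are both given by multiplication twisted by the central element $s$, namely $\phi(x \otimes y) = sxy$ and $\psi(y \otimes x) = syx$. Consequently, the trace ideals satisfy $MN = NM = sR$, as already noted in the discussion leading up to the corollary.

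Next, I would verify that these trace ideals are nilpotent, which is the only hypothesis of Theorem~\ref{morita} still needing justification. Since $s \in Z(R) \cap Nil(R)$, there exists $k \in \mathbb{N}$ with $s^k = 0$, and centrality of $s$ gives $(sR)^k = s^k R^k \subseteq s^k R = 0$. Thus $MN$ and $NM$ are nilpotent ideals of $A$ and $B$ respectively.

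With both hypotheses of Theorem~\ref{morita} in hand, I would simply invoke it: ${\rm K}_s(R)$ is GNC if and only if $A$ and $B$ are both nil-clean, and since $A = B = R$, this is equivalent to $R$ being nil-clean. There is no real obstacle here beyond the bookkeeping of identifying the Morita context data correctly; centrality of $s$ is exactly what makes $sR$ a two-sided ideal (needed for the trace-ideal interpretation) and what makes the nilpotence computation $(sR)^k = s^k R^k$ immediate.
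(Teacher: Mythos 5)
Your proposal is correct and matches the paper's own route exactly: the paper likewise identifies ${\rm K}_s(R)$ as the Morita context with $A=B=M=N=R$ and trace ideals $MN=NM=sR$, notes that these are nilpotent precisely because $s$ is a central nilpotent, and then invokes Theorem~\ref{morita}. Your explicit computation $(sR)^k=s^kR=0$ is a welcome small addition that the paper leaves implicit.
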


Following Tang and Zhou (cf. \cite{tangac}), for $n\geq 2$ and for $s\in {\rm Z}(R)$, the $n\times n$ formal matrix ring over $R$ defined with the help of $s$, and denoted by ${\rm M}_{n}(R;s)$, is the set of all $n\times n$ matrices over $R$ with usual addition of matrices and with multiplication defined below:

\noindent For $(a_{ij})$ and $(b_{ij})$ in ${\rm M}_{n}(R;s)$, set
$$(a_{ij})(b_{ij})=(c_{ij}), \quad \text{where} ~~ (c_{ij})=\sum s^{\delta_{ikj}}a_{ik}b_{kj}.$$
Here, $\delta_{ijk}=1+\delta_{ik}-\delta_{ij}-\delta_{jk}$, where $\delta_{jk}$, $\delta_{ij}$, $\delta_{ik}$ are the standard {\it Kroncker delta} symbols.

\medskip

Thereby, we arrive at the following.

\begin{corollary}\label{cor3.39}
Let $R$ be a ring and $s\in {\rm Z}(R) \cap {\rm Nil}(R)$. Then, ${\rm M}_{n}(R;s)$ is GNC if, and only if, $R$ is nil-clean.
\end{corollary}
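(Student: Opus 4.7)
The plan is to exhibit an explicit nilpotent ideal $K$ of ${\rm M}_n(R;s)$ and identify the resulting quotient with a trivial extension, then cascade through equivalences already established in the paper. Specifically, I would set $K := s\cdot {\rm M}_n(R;s) = \{(a_{ij}) : a_{ij} \in sR \text{ for all } i,j\}$. Since $s \in {\rm Z}(R) \cap {\rm Nil}(R)$, this is a nilpotent ideal of ${\rm M}_n(R;s)$.

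Next I would analyze the multiplication induced on $\bar R := {\rm M}_n(R;s)/K$. Every term $s^{\delta_{ikj}}a_{ik}b_{kj}$ in $c_{ij} = \sum_k s^{\delta_{ikj}}a_{ik}b_{kj}$ with $\delta_{ikj} \geq 1$ lies in $sR$ and hence vanishes modulo $K$. Unpacking $\delta_{ikj} = 1 + \delta_{ij} - \delta_{ik} - \delta_{jk}$, the surviving indices $k$ are exactly $k = i$ when $i = j$, and $k \in \{i,j\}$ when $i \neq j$. Therefore
$$\bar c_{ii} = \bar a_{ii}\bar b_{ii}, \qquad \bar c_{ij} = \bar a_{ii}\bar b_{ij} + \bar a_{ij}\bar b_{jj} \ (i \neq j),$$
which identifies $\bar R$ with the trivial extension $(R/sR)^n \propto \bar I$, where $(R/sR)^n$ sits as the ``diagonal'' subring and $\bar I$ denotes the $(R/sR)^n$-bi-module of off-diagonal matrices over $R/sR$ (acted upon componentwise from each side).

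The conclusion then follows by chaining the established equivalences: by Proposition \ref{factor}(1), ${\rm M}_n(R;s)$ is GNC iff $\bar R$ is GNC; by part (1) of the unlabeled corollary on trivial extensions immediately after Corollary \ref{image}, $(R/sR)^n \propto \bar I$ is GNC iff $(R/sR)^n$ is GNC; by Proposition \ref{factor}(3), since $n \geq 2$, $(R/sR)^n$ is GNC iff $R/sR$ is nil-clean; and since $sR$ is a nil ideal of $R$, $R/sR$ is nil-clean iff $R$ is nil-clean (the standard lifting of nil-clean decompositions through a nil ideal, cf.\ \cite[Proposition 3.15]{diesl}).

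I expect the main obstacle to be the combinatorial bookkeeping with the Kronecker deltas in the second paragraph --- pinning down exactly which $k$ survive modulo $K$ and recognizing the resulting multiplication as a trivial extension structure. An alternative route, closer in spirit to Corollary \ref{generalized matrix ring}, would be to write ${\rm M}_n(R;s)$ as a Morita context by separating off the first row and column and to induct on $n$ via Theorem \ref{morita}, noting that the trace ideals are contained in $s^2R$ and $s\cdot {\rm M}_{n-1}(R;s)$ respectively (hence nilpotent); that route, however, demands the companion statement that ${\rm M}_n(R;s)$ is itself nil-clean (not merely GNC) whenever $R$ is, which is most transparently obtained from the same reduction to $(R/sR)^n \propto \bar I$.
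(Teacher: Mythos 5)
Your argument is correct, and it takes a genuinely different route from the paper's. The paper proves Corollary \ref{cor3.39} by induction on $n$: the base case ${\rm M}_2(R;s)={\rm K}_{s^2}(R)$ is settled by Corollary \ref{generalized matrix ring}, and for $n>2$ the ring ${\rm M}_n(R;s)$ is presented as a Morita context with corners $A={\rm M}_{n-1}(R;s)$ and $R$ and nilpotent trace ideals ($MN\subseteq sA$, $NM\subseteq s^2R$), after which Theorem \ref{morita} is invoked. You instead factor out the single nilpotent ideal $s\,{\rm M}_n(R;s)$ in one step, identify the quotient as the trivial extension $(R/sR)^n\propto\bar I$, and chain Proposition \ref{factor}(1), the trivial-extension corollary, Proposition \ref{factor}(3), and the lifting of nil-cleanness modulo a nil ideal. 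Your Kronecker-delta bookkeeping checks out: for $i=j$ only $k=i$ survives modulo $s$, for $i\neq j$ only $k\in\{i,j\}$ survives, and the product of two off-diagonal matrices vanishes in the quotient, so the multiplication really is that of a trivial extension over the diagonal copy of $(R/sR)^n$. What your route buys is exactly the point you flag at the end: Theorem \ref{morita} characterizes GNC of the Morita context in terms of the corner rings being \emph{nil-clean}, so the paper's inductive hypothesis (``${\rm M}_{n-1}(R;s)$ is GNC iff $R$ is nil-clean'') does not by itself drive the inductive step --- one needs the stronger companion fact that ${\rm M}_{n-1}(R;s)$ is nil-clean whenever $R$ is. Your quotient description delivers that companion fact uniformly, since the same reduction works verbatim with ``nil-clean'' in place of ``GNC'', and it is non-inductive and structurally more transparent; the cost is only the combinatorial verification of which indices survive modulo $s$.
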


\begin{proof}
We shall use induction on $n$. If $n = 2$, then ${\rm M}_{2}(R;s) = Ks^2(R)$. So, the claim is true in view of Corollary \ref{generalized matrix ring}. Suppose $n > 2$ and assume that the claim holds for $M_{n-1}(R; s)$. Letting $A = M_{n-1}(R; s)$, we observe that $M_n(R; s) =
    \begin{pmatrix}A & M \\ N & R \end{pmatrix}$
is a Morita context, where $M=\begin{pmatrix} M_{1n} \\ \vdots \\ M_{n-1,n} \end{pmatrix}$ and $N = \begin{pmatrix} M_{n1 \cdots M_{n,n-1}}\end{pmatrix}$ with $M_{in} = M_{ni} = R$ for all $i = 1,..., n-1$. Moreover, for $ x= \begin{pmatrix} x_{1n} \\ \vdots \\ x_{n-1,n} \end{pmatrix}$ and $y=\begin{pmatrix} y_{n1 \cdots y_{n,n-1}}\end{pmatrix}$ , we calculate that
\begin{align}
        xy&= \begin{pmatrix}
            s^2x_{1n}y_{n1} & sx_{1n}y_{n2} & \cdots & sx_{1n}y_{n,n-1}\\
            sx_{2n}y_{n1} & s^2x_{2n}y_{n2} & \cdots & sx_{2n}y_{n,n-1} \\
            \vdots & \vdots & & \vdots \\
            sx_{n-1,n}y_{n1} & sx_{n-1,n}y_{n2} & \cdots & s^2x_{n-1,n}y_{n,n-1}
        \end{pmatrix} \in A \\
       yx&= \sum_{i=1}^{n-1}s^2y_{ni}x_{in} \in R.
\end{align}
Therefore, by (1) and (2), we have \( MN \subseteq sA \) and \( NM \subseteq s^2 R \). Moreover, since \( s \in Z(R) \cap Nil(R) \), it follows that \( MN \) and \( NM \) are nilpotent. Hence, the proof is completed with the aid of Theorem \ref{morita}, as expected.
\end{proof}

\section{GNC Group Rings}

Following the traditional terminology, we say that a group $G$ is a {\it $p$-group} if every element of $G$ is a power of the prime number $p$. Moreover, a group $G$ is said to be {\it locally finite} if every finitely generated subgroup is finite.

\medskip

Suppose now that $G$ is an arbitrary group and $R$ is an arbitrary ring. As usual, $RG$ stands for the group ring of $G$ over $R$. The homomorphism $\varepsilon :RG\rightarrow R$, defined by $\varepsilon (\displaystyle\sum_{g\in G}a_{g}g)=\displaystyle\sum_{g\in G}a_{g}$, is called the {\it augmentation map} of $RG$ and its kernel, denoted by $\Delta (RG)$, is called the {\it augmentation ideal} of $RG$.

\medskip

Before receiving our major assertion for this section, we start our considerations with the next few preliminaries.

\begin{lemma} \label{R is GNC}
If $RG$ is a GNC ring, then $R$ is too GNC.
\end{lemma}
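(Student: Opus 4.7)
The plan is to exploit the augmentation homomorphism $\varepsilon: RG \to R$ defined just before the lemma. First, I would observe that $\varepsilon$ is a surjective ring homomorphism: it respects the ring operations by the standard group ring computation, and surjectivity is immediate from $\varepsilon(r \cdot 1_G) = r$ for every $r \in R$. Thus $R$ is a homomorphic image of $RG$, and Corollary \ref{image} (every homomorphic image of a GNC ring is GNC) delivers the result in one line.

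If one prefers to avoid the black-box appeal to Corollary \ref{image}, the same idea unpacks into a short hands-on verification. Take $a \in R \setminus U(R)$. I would first check that $a$ remains a non-unit inside $RG$ via the canonical embedding $r \mapsto r \cdot 1_G$: any inverse $\sum_{g} b_g g \in RG$ of $a$ would yield $\sum_g (a b_g) g = 1 \cdot 1_G$, forcing $a b_1 = 1$ in $R$, contradicting $a \notin U(R)$. Hence the GNC hypothesis applied in $RG$ supplies $e = e^2 \in RG$ and $q \in Nil(RG)$ with $a = e + q$. Applying $\varepsilon$ then gives $a = \varepsilon(e) + \varepsilon(q)$, where $\varepsilon(e)^2 = \varepsilon(e^2) = \varepsilon(e)$ is an idempotent of $R$, and if $q^n = 0$ then $\varepsilon(q)^n = \varepsilon(q^n) = 0$, so $\varepsilon(q) \in Nil(R)$.

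There is no genuine obstacle here: the only tiny subtlety is confirming that non-units of $R$ stay non-units in $RG$ (for the direct argument), or equivalently that $\varepsilon$ is surjective (for the Corollary \ref{image} argument), both of which are standard. I would present the one-line proof based on Corollary \ref{image}, since it is the most transparent.
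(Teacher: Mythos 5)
Your proposal is correct and follows exactly the paper's route: the paper also observes that $RG/\Delta(RG) \cong R$ (i.e., $R$ is the image of $RG$ under the augmentation map) and invokes Corollary~\ref{image}. Your hands-on unpacking is a fine sanity check but adds nothing essential beyond the one-line argument.
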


\begin{proof}
We know that \( RG/\Delta(RG) \cong R \). Therefore, in virtue of Corollary \ref{image}, it follows that \( R \) must be a GNC ring.
\end{proof}

\begin{lemma}\label{rg}
Let $R$ be a GNC ring with $p \in Nil(R)$ and let $G$ be a locally finite $p$-group, where $p$ is a prime. Then, the group ring $RG$ is GNC.
\end{lemma}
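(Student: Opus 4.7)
The plan is to reduce the problem to Proposition \ref{factor}(1) by showing that the augmentation ideal $\Delta(RG)$ is a nil (in fact two-sided) ideal of $RG$, and then use the canonical isomorphism $RG/\Delta(RG)\cong R$ together with the hypothesis that $R$ is GNC.

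First I would set up the reduction. Since $RG/\Delta(RG)\cong R$ is GNC by hypothesis, Proposition \ref{factor}(1) tells us that $RG$ is GNC as soon as we establish that $\Delta(RG)$ is nil. Thus the whole content of the lemma is the assertion that each element $\alpha\in\Delta(RG)$ is nilpotent in $RG$.

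Next, I would exploit local finiteness. Any $\alpha\in\Delta(RG)$ has only finitely many non-zero coefficients, so its support generates a finite subgroup $H\leq G$. Because $G$ is a locally finite $p$-group, $H$ is a finite $p$-group, and $\alpha\in\Delta(RH)\subseteq RH\subseteq RG$. It therefore suffices to show that for every finite $p$-group $H$, the augmentation ideal $\Delta(RH)$ is nilpotent in $RH$ (since a nilpotent element of $RH$ is plainly nilpotent in $RG$).

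The heart of the argument, and the step I expect to be the main obstacle, is the classical fact that $\Delta(RH)$ is nilpotent whenever $H$ is a finite $p$-group and $p\in Nil(R)$. I would prove this by induction on $|H|$ together with induction on the nilpotence index of $p$ in $R$. For the base case $|H|=p$, say $H=\langle g\rangle$, we have $g-1\in\Delta(RH)$ generating $\Delta(RH)$ as an ideal, and one computes
\[
(g-1)^{p}=g^{p}-1+p\eta=p\eta,
\]
for some $\eta\in RH$ (using that binomial coefficients $\binom{p}{k}$ for $1\le k\le p-1$ are divisible by $p$). Iterating and using that $p$ is nilpotent in $R$ shows $(g-1)^{N}=0$ for some $N$, hence $\Delta(RH)$ is nilpotent. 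For the inductive step I would pick a central subgroup $N\trianglelefteq H$ of order $p$ (which exists since $H$ is a finite $p$-group) and use the exact sequence
\[
0\longrightarrow \Delta(RN)\cdot RH\longrightarrow \Delta(RH)\longrightarrow \Delta(R(H/N))\longrightarrow 0,
\]
together with the fact that $\Delta(RN)\cdot RH$ is nilpotent by the base case (applied to the central subgroup $N$) and $\Delta(R(H/N))$ is nilpotent by the inductive hypothesis; a standard filtration argument then yields nilpotence of $\Delta(RH)$.

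Combining these pieces: every $\alpha\in\Delta(RG)$ lies in some $\Delta(RH)$ with $H$ a finite $p$-subgroup of $G$, and $\Delta(RH)$ is nilpotent, so $\alpha$ is nilpotent in $RG$. Hence $\Delta(RG)$ is nil. Applying Proposition \ref{factor}(1) to the nil ideal $I=\Delta(RG)$ and the GNC quotient $RG/\Delta(RG)\cong R$ concludes that $RG$ is GNC, as desired.
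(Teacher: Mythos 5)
Your proposal is correct and follows essentially the same route as the paper: reduce via the isomorphism $RG/\Delta(RG)\cong R$ and Proposition~\ref{factor}(1) once the augmentation ideal is known to be nil. The only difference is that where the paper imports the nilness of $\Delta(RG)$ from Connell's Proposition~16, you prove it directly (reduction to finite $p$-subgroups by local finiteness, the central computation $(g-1)^p=p\eta$, and induction on $|H|$ via a central subgroup of order $p$), and that self-contained argument is sound.
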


\begin{proof}
In accordance with \cite[Proposition 16]{con}, we know that \( \Delta(RG) \) is a nil-ideal. On the other vein, it is clear that \( RG = \Delta(RG) + R \). Therefore, invoking Lemma \ref{sum subring and nil ideal}, the proof is straightforward. Alternatively, we can write the proof as follows: Since \( \Delta(RG) \) is nil and \( RG/\Delta(RG) \cong R \), Theorem \ref{factor}(1) allows us to get that \( RG \) is a GNC ring.
\end{proof}

\begin{remark} In the lemma above, the condition \( p \in Nil(R) \) is necessary and cannot be overlooked. This is because, while \( R = \mathbb{Z}_3 \) is a GNC ring and \( G = C_2 = \langle g \rangle \) is a locally finite $2$-group, the ring \( RG \) is surely {\it not} a GNC ring. It can be easily shown that \( \text{Id}(RG) = \{0, 1, 2+g, 2+2g\} \) and \( \text{Nil}(RG) = \{0\} \). Moreover, the element \( 1+g \) is a non-unit in \( RG \) that cannot be represented in the form of a nil-clean element.
\end{remark}

\begin{lemma}
Let \( R \) be a GNC ring and let \( G \) be a group such that \( \Delta(RG) \subseteq J(RG) \). Then, \( RG/J(RG) \) is a GNC ring.
\end{lemma}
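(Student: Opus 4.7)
The plan is to recognize $RG/J(RG)$ as a homomorphic image of $R$ itself and then invoke Corollary \ref{image}. The hypothesis $\Delta(RG)\subseteq J(RG)$ is exactly what is needed to manufacture this surjection.

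First I would recall that the augmentation map $\varepsilon\colon RG\to R$ is a surjective ring homomorphism with $\ker\varepsilon=\Delta(RG)$, so $RG/\Delta(RG)\cong R$. Because $\Delta(RG)\subseteq J(RG)$, the canonical projection $RG\twoheadrightarrow RG/J(RG)$ factors through $RG/\Delta(RG)$; equivalently, by the third isomorphism theorem,
\[
RG/J(RG)\;\cong\;\bigl(RG/\Delta(RG)\bigr)\big/\bigl(J(RG)/\Delta(RG)\bigr)\;\cong\;R/\varepsilon\bigl(J(RG)\bigr).
\]
Here $\varepsilon(J(RG))$ is a genuine two-sided ideal of $R$ because $\varepsilon$ is a surjective ring homomorphism, which is the only minor bookkeeping point to confirm.

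Since $R$ is GNC by assumption and $RG/J(RG)\cong R/\varepsilon(J(RG))$ is a homomorphic image of $R$, Corollary \ref{image} immediately delivers that $RG/J(RG)$ is GNC. I do not anticipate any substantive obstacle: the statement is essentially a one-line consequence of the isomorphism theorems combined with the already-established fact that the GNC property is preserved under homomorphic images.
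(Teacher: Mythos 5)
Your proof is correct and takes essentially the same route as the paper's: both arguments exhibit $RG/J(RG)$ as a homomorphic image of $R$ and then invoke Corollary \ref{image}. The only cosmetic difference is that the paper writes $RG = J(RG) + R$ and applies the second isomorphism theorem to get $RG/J(RG) \cong R/(J(RG)\cap R)$, whereas you factor the projection through $RG/\Delta(RG)\cong R$ via the third isomorphism theorem; the two computations are interchangeable.
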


\begin{proof}
Obviously, we have \( RG = \Delta(RG) + R \), because \( \Delta(RG) \subseteq J(RG) \), which leads to the fact that \( RG = J(RG) + R \). Therefore,
\[ R/(J(RG) \cap R) \cong (J(RG) + R)/J(RG) = RG/J(RG), \]
and since the left hand-side is a GNC ring, we conclude that \( RG/J(RG) \) is a GNC ring, as formulated.
\end{proof}

According to Lemma \ref{R is GNC}, if $RG$ is a GNC ring, then $R$ is also a GNC ring. In what follows, we will focus on the topic of what properties the group $G$ will have when $RG$ is a GNC ring. Explicitly, we obtain the following.

\begin{lemma}\label{char p}
Suppose $R$ is a ring of characteristic $char(R) = p$, where $p$ is a prime number, and $G$ is an abelian group. If $RG$ is a GNC ring, then $G$ is a $p$-group.
\end{lemma}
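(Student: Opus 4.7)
The plan is to reduce to a group ring over a field of characteristic $p$, then examine the nil-clean decomposition of $1-g$ for a putative non-$p$-element $g \in G$, exploiting commutativity of the group ring that comes from $G$ being abelian.

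First I would pick any maximal ideal $\mathfrak{m} \subseteq R$. Since $p \cdot 1_R = 0$, the residue field $F := R/\mathfrak{m}$ has characteristic $p$, and the induced surjection $RG \twoheadrightarrow FG$ together with Corollary~\ref{image} shows that $FG$ is GNC. Because $G$ is abelian, $FG$ is commutative. Thus, after replacing $R$ by $F$, I may work in a commutative GNC ring $FG$ with $F$ a field of characteristic $p$; the statement about $G$ being a $p$-group is unchanged by this reduction.

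Assume for contradiction that some $g \in G$ has order that is not a power of $p$. After passing to a suitable $p$-th power (so that the $p$-part of the finite order is stripped off), I may assume either that $g$ has infinite order, or that $g$ has finite order $n > 1$ with $\gcd(n,p) = 1$. The augmentation shows $1 - g \in \Delta(FG)$, so $1-g \notin U(FG)$, and the GNC hypothesis yields $e = e^2 \in FG$ and $q \in Nil(FG)$ with $1 - g = e + q$, equivalently $g = (1-e) - q$. Expanding in the commutative ring $FG$ via the binomial theorem and using $(1-e)^j = 1-e$ for $j \geq 1$, I obtain the identity
\[
 g^k \;=\; (1-e)(1-q)^k \;+\; e(-q)^k \qquad (k \geq 1).
\]

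The heart of the argument is to force $e = 0$, which will make $1 - g = q$ nilpotent. For $g$ of infinite order, I choose $k$ with $q^k = 0$; the identity then reduces to $g^k = (1-e)(1-q)^k$, and since $g^k$ and $(1-q)^k$ are both units of $FG$, the idempotent $1-e$ is a unit, hence equals $1$, giving $e = 0$. For $g$ of finite order $n$ coprime to $p$, I take $k = n$ and multiply $1 = g^n = (1-e)(1-q)^n + e(-q)^n$ by $e$ to get $e = e(-q)^n$; iterating yields $e = e(-q)^{tn}$ for every $t \geq 1$, and nilpotence of $q$ then forces $e = 0$. In either case $1 - g = q$ is nilpotent in $FG$, hence also in the subring $F\langle g \rangle$. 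But $F\langle g \rangle$ is isomorphic either to $F[x, x^{-1}]$ (a domain, in the infinite-order case) or to $F[x]/(x^n - 1)$ (reduced, because $\gcd(n,p) = 1$ makes $x^n - 1$ separable over $F$ and thus a product of distinct irreducible factors); in neither ring can the nonzero element $1 - g$ be nilpotent, which is the required contradiction.

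The main obstacle I anticipate is the infinite-order case, since Lemma~\ref{abelian} on abelian GNC rings splits into ``local with nil $J(R)$'' or ``strongly nil-clean'', and neither conclusion by itself immediately excludes an element of infinite order. The explicit identity for $g^k$ above is what lets me handle the infinite-order and finite-coprime-order cases uniformly, with no case distinction beyond the final step of exhibiting a reduced subring containing $1-g$.
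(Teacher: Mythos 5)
There is a genuine gap at the very first step. The paper's standing convention is that $R$ is associative but \emph{not} necessarily commutative, so a maximal (two-sided) ideal $\mathfrak{m}$ of $R$ only yields a simple quotient $R/\mathfrak{m}$, not a residue field; indeed $R$ may admit no nonzero commutative quotient at all. A concrete instance covered by the lemma is $R=M_2(\mathbb{Z}_2)$, which has characteristic $2$ and for which $RG$ is GNC whenever $G$ is a locally finite $2$-group by Lemma~\ref{rg}: this ring is simple and noncommutative, so your reduction produces nothing. Without that reduction you cannot pass to a commutative ambient ring, and your key identity $g^k=(1-e)(1-q)^k+e(-q)^k$ collapses, because a GNC decomposition $1-g=e+q$ carries no guarantee that $e$ and $q$ commute (GNC is not \emph{strongly} nil-clean), so the binomial expansion of $((1-e)-q)^k$ is not available.

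The repair is shorter and needs no commutativity of $R$, and it is what the paper does: since $G$ is abelian and group elements commute with all coefficients, $g$ is central in $RG$; hence $1-e=g+q=g(1+g^{-1}q)$ is a unit (as $g^{-1}q$ is nilpotent because $g^{-1}$ is central), and an idempotent which is a unit equals $1$, so $e=0$ and $1-g=q$ is nilpotent. Choosing $p^k$ at least the nilpotency index and using characteristic $p$, one gets $(1-g)^{p^k}=1-g^{p^k}=0$, so $g^{p^k}=1$. Your final step (a contradiction from $1-g$ being nilpotent in a reduced subring $F\langle g\rangle$) would be sound over a field, but even there it is a detour compared with reading off $g^{p^k}=1$ directly from the characteristic.
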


\begin{proof}
Assuming that \( 1 \neq g \in G \), then \( 1 - g \in \Delta(RG) \). So, a plain trick shows that \( 1 - g \) is not a unit. Thus, there exist \( e \in \text{Id}(RG) \) and \( q \in \text{Nil}(RG) \) such that \( 1 - g = e + q \). This enables us that \( 1 - e \in \text{Id}(RG) \cap U(RG) \), so \( e = 0 \), which leads to \( 1 - g = q \in \text{Nil}(RG) \). Hence, there exists \( k \in \mathbb{N} \) such that \( (1 - g)^{k+1} = 0 \), and from \cite[
Theorem 3.2(A)]{DL}, we conclude that \( g^{p^k} = (1 - (1 - g))^{p^k} = 1 \), as required.
\end{proof}

\begin{corollary}
Let $R$ be a ring with \( p \in \text{Nil}(R) \), where $p$ is a prime number, and $G$ is an abelian group such that $RG$ is a GNC ring. Then, $G$ is a $p$-group.
\end{corollary}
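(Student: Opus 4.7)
\medskip

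The plan is to reduce to Lemma~\ref{char p} by passing to a quotient of $R$ that has prime characteristic $p$. The natural candidate is $\bar R := R/pR$, since $pR$ is a two-sided ideal of $R$ (as $p$ is central) and $\bar R$ manifestly satisfies $\mathrm{char}(\bar R) \mid p$. One needs to check that $\bar R$ is nonzero, but since $p \in Nil(R)$ we cannot have $p$ invertible in $R$ (unless $R=0$, which is a trivial case), so $pR \neq R$ and $\bar R \neq 0$; thus $\mathrm{char}(\bar R) = p$.

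Next, I would argue that $\bar R G$ is still GNC. The key point is that the kernel of the natural surjection $RG \twoheadrightarrow \bar R G$ equals $p \cdot RG$, which is a nil ideal of $RG$. Indeed, $p$ is central in $RG$ and $p \in Nil(R)$ says $p^n = 0$ in $R$ for some $n \in \mathbb{N}$, so every element of $p \cdot RG$ has the form $p y$ with $y \in RG$, and $(py)^n = p^n y^n = 0$. Hence $p \cdot RG$ is a nil (in fact nilpotent) ideal of $RG$, and by Corollary~\ref{image} the quotient $RG/(p \cdot RG) \cong \bar R G$ is a GNC ring.

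Finally, $\bar R$ has characteristic $p$, $G$ is abelian, and $\bar R G$ is GNC, so Lemma~\ref{char p} applies and yields that $G$ is a $p$-group, which completes the argument.

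The only delicate point is the identification $RG/(p\cdot RG) \cong (R/pR)G$ together with the verification that $p\cdot RG$ is nil; once this is in place the result is immediate from Lemma~\ref{char p}, so I do not anticipate a substantive obstacle.
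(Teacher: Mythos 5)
Your proposal is correct and follows essentially the same route as the paper: reduce to Lemma~\ref{char p} by passing to a quotient of $R$ of characteristic $p$ (whose group ring is GNC because homomorphic images of GNC rings are GNC). The paper quotients by $J(R)$ rather than by $pR$, but since $pR$ is a nil ideal contained in $J(R)$ this is only a cosmetic difference, and your verification that $p\cdot RG$ is nil and that $RG/(p\cdot RG)\cong (R/pR)G$ is sound.
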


\begin{proof}
We consider the canonical surjection \( RG \to R/J(R)G \). Since \( p \in \text{Nil}(R) \), it follows that the characteristic of \( R/J(R) \) is equal to \( p \). Therefore, by Lemma \ref{char p}, the proof is complete.
\end{proof}

\begin{lemma} \label{comm}
Let $R$ be a commutative ring and $G$ be an abelian group such that $RG$ is a GNC ring. Then, $G$ is a $p$-group, where $p \in Nil(R)$.
\end{lemma}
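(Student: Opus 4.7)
The strategy is to reduce to the preceding corollary, which already handles the case of an abelian $G$ and a ring $R$ that contains a prime $p \in Nil(R)$. So the task is really just to manufacture such a prime in $Nil(R)$. First, by Lemma \ref{R is GNC}, $R$ itself is GNC, and since $R$ is commutative it is in particular abelian. Lemma \ref{abelian} then splits the situation into two cases: either $R$ is strongly nil-clean, or $R$ is local with $J(R)$ nil.

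If $R$ is strongly nil-clean, then $R/J(R)$ is Boolean, so $2 \in J(R) \subseteq Nil(R)$; taking $p = 2$ the previous corollary finishes the argument. If instead $R$ is local with $J(R)$ nil, then $F := R/J(R)$ is a field. When $\mathrm{char}\,F = p > 0$ the element $p \cdot 1$ lies in $J(R) \subseteq Nil(R)$, so once again the preceding corollary concludes. The remaining obstacle is to exclude the possibility $\mathrm{char}\,F = 0$ whenever $G$ is nontrivial.

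To rule this out, pick $g \ne 1$ in $G$. Then $1 - g \in \Delta(RG)$ is a non-unit, so GNC produces a decomposition $1 - g = e + q$ with $e^{2} = e$ and $q \in Nil(RG)$. Because $RG$ is commutative, $g^{-1}q$ is nilpotent, so $1 + g^{-1}q \in U(RG)$, and therefore $1 - e = g + q = g(1 + g^{-1}q) \in U(RG)$. Combined with $e(1-e) = 0$ this forces $e = 0$, so that $1 - g = q$ is nilpotent in $RG$.

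Since $R$ is commutative, $R$ is central in $RG$ and $J(R)\cdot RG$ is a two-sided ideal with $RG/(J(R)\cdot RG) \cong FG$. Hence $1 - \bar g$ is nilpotent in $FG$, and in particular it is nilpotent in the subring $F\langle g\rangle$. But $F\langle g\rangle$ is isomorphic to $F[x,x^{-1}]$ if $g$ has infinite order and to $F[x]/(x^{m}-1)$ if $g$ has finite order $m$; in characteristic $0$ both rings are reduced (in the finite case because $x^{m}-1$ has distinct roots in an algebraic closure of $F$, so the Chinese Remainder Theorem splits $F[x]/(x^{m}-1)$ as a product of fields). Consequently $1 - \bar g = 0$ in $F\langle g\rangle$, which is absurd because the coefficient of $1_{G}$ in $1 - g$ equals $1 \notin J(R)$. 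This contradiction forces $\mathrm{char}\,F$ to be a prime $p$, and the preceding corollary then delivers the conclusion. The one genuinely delicate step is the characteristic-zero elimination, which hinges on reducing to the subring generated by a single group element so that the standard reducedness of group algebras of cyclic groups in characteristic $0$ can be applied.
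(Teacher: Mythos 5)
Your proof is correct, but it takes a genuinely different route from the paper's. The paper applies Lemma~\ref{abelian} to the commutative ring $RG$ itself, obtaining that $RG$ is either local with nil radical or strongly nil-clean, and then quotes Nicholson's local group ring result \cite{niclocal} (to get $\Delta(RG)\subseteq J(RG)$, hence $\Delta(RG)$ nil, hence Connell's criterion \cite{con} applies) in the first case, and \cite[Theorem 4.7(1)]{kosan1} together with \cite[Proposition 3.14]{diesl} in the second. You instead apply Lemma~\ref{abelian} to $R$ (legitimate, since $R$ is GNC by Lemma~\ref{R is GNC} and commutative, hence abelian), manufacture a prime in $Nil(R)$ --- namely $2$ in the strongly nil-clean case and $\mathrm{char}(R/J(R))$ in the local case --- and then hand everything to the corollary immediately preceding the lemma. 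The only real work is your elimination of characteristic zero: you show $1-g$ is nilpotent in $RG$ (the same computation as in Lemma~\ref{char p} and Theorem~\ref{gr}), pass to $(R/J(R))G$, and use that the group algebra of a cyclic group over a field of characteristic $0$ is reduced; this step is sound ($F[x,x^{-1}]$ is a domain, and $x^m-1$ is separable in characteristic $0$). Your approach buys independence from the external references on local and strongly nil-clean group rings at the price of this extra characteristic-zero argument. Like the paper's statement, it tacitly assumes $G$ nontrivial (for $G=1$ and, say, $R=\mathbb{Q}$ there is no prime in $Nil(R)$ at all), a degenerate case you at least flag explicitly.
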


\begin{proof}
If $RG$ is a GNC ring, seeing on Lemma \ref{abelian}, we have that either $RG$ is local with $J(RG)$ being nil, or $RG$ is a strongly nil-clean ring.

If, firstly, $RG$ is local and $J(RG)$ is nil, then owing to \cite[Corollary]{niclocal}, we deduce $\Delta(RG) \subseteq J(RG)$. Since $J(RG)$ is nil, it gives that $\Delta(RG)$ must also be nil. Therefore, looking at \cite[Proposition 16(i)]{con}, we conclude that $G$ is a $p$-group with $p \in \text{Nil}(R)$.

If now $RG$ is a strongly nil-clean ring, then according to \cite[Theorem 4.7(1)]{kosan1}, we have that $G$ is a $2$-group. Additionally, from \cite[Proposition 3.14]{diesl}, we can get $2 \in \text{Nil}(R)$, as pursued.
\end{proof}

In the following theorem, we significantly extend Lemma \ref{char p} and \ref{comm} and thus demonstrate that if $RG$ is a GNC ring, then it must hold that $\Delta(RG) \subseteq J(RG)$. Specifically, we are prepared to prove the next result.

\begin{theorem}\label{gr}
Let $R$ be a ring and $G$ an abelian group such that $RG$ is a GNC ring. Then, $G$ is a $p$-group, where $p$ belongs to ${\rm Nil}(R)$.
\end{theorem}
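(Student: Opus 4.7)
My plan is to show that the augmentation ideal $\Delta(RG)$ is a nil ideal of $RG$ and then invoke \cite[Proposition~16(i)]{con}, exactly as in the final step of the proof of Lemma~\ref{comm}. This reroutes around Lemma~\ref{abelian} (used to split cases in the proof of Lemma~\ref{comm}), which is no longer available since $R$ is not assumed commutative.

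First, I would verify that $1-g \in \mathrm{Nil}(RG)$ for every $g \in G$. Fixing $g \neq 1$, the augmentation map sends $1-g$ to $0$, so $1-g \notin U(RG)$, and the GNC hypothesis yields $1-g = e + q$ with $e \in \mathrm{Id}(RG)$ and $q \in \mathrm{Nil}(RG)$. Rearranging gives $1 - e = g + q$. Here abelianness of $G$ enters crucially: every element of $G$ is central in $RG$, so $g$ commutes with $q$, whence $g + q = g(1 + g^{-1}q) \in U(RG)$. A unit idempotent must equal $1$, forcing $e = 0$ and $1-g = q \in \mathrm{Nil}(RG)$. This is essentially the opening move of Lemma~\ref{char p}, now working without any characteristic hypothesis.

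The next step, and the heart of the argument, is to pass from pointwise nilness of $\{g-1 : g \in G\}$ to nilness of the whole ideal they generate. An arbitrary $x \in \Delta(RG)$ can be written as $x = \sum_{i=1}^{k} r_i(g_i - 1)$ with $r_i \in R$ and $g_i \in G \setminus \{1\}$. The subtle point is that, when $R$ is noncommutative, the summands $r_i(g_i-1)$ do \emph{not} commute in general, so one cannot simply cite ``a sum of commuting nilpotents is nilpotent''. However, each $g_i - 1$ is \emph{itself} central in $RG$, and this suffices: by pushing every factor $(g_{i_\ell}-1)$ past the $r$'s that follow it, any monomial appearing in the expansion of $x^N$ rearranges as
\[
r_{i_1} r_{i_2} \cdots r_{i_N}\,\prod_{j=1}^{k}(g_j - 1)^{n_j}, \qquad n_1 + \cdots + n_k = N.
\]
Choose $M$ with $(g_i-1)^M = 0$ for every $i \leq k$. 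Once $N \geq kM$, the pigeonhole principle produces some $n_j \geq M$, so the central factor vanishes and the monomial is $0$; therefore $x^{kM} = 0$ and $\Delta(RG) \subseteq \mathrm{Nil}(RG)$.

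With $\Delta(RG)$ nil in hand, \cite[Proposition~16(i)]{con} (the same citation used in the proof of Lemma~\ref{comm}) delivers directly that $G$ is a $p$-group with $p \in \mathrm{Nil}(R)$. I expect the middle step above to be the main obstacle: the commuting-nilpotents shortcut is false for noncommutative $R$, so the argument must exploit centrality of the generators $g_i - 1$ rather than of the weighted summands $r_i(g_i - 1)$, and then combine this with the pigeonhole count on the multiplicities $n_j$.
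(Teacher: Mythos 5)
Your proof is correct, and after the common opening move it takes a genuinely different route from the paper's. Both arguments begin identically: $\varepsilon(1-g)=0$ shows $1-g\notin U(RG)$, and centrality of $g$ turns the idempotent $1-e=g(1-g^{-1}q)$ into a unit, forcing $e=0$ and $1-g\in{\rm Nil}(RG)$. From there the paper takes a softer path: each $1-g$ is a \emph{central} nilpotent, hence generates a nil ideal and so lies in $J(RG)$; thus $\Delta(RG)\subseteq J(RG)$, and \cite[Proposition~15(i)]{con} gives that $G$ is a $p$-group with $p\in J(R)$, the nilpotence of $p$ then coming from Lemma~\ref{R is GNC} combined with Lemma~\ref{J(R) nil}. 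You instead prove the stronger statement that $\Delta(RG)$ is a nil ideal outright, via the rearrangement-plus-pigeonhole count on the central generators $g_i-1$; that computation is sound (every monomial of $x^N$ does collapse to $r_{i_1}\cdots r_{i_N}\prod_j(g_j-1)^{n_j}$ because the $g_j-1$ are central, and in fact $N\ge k(M-1)+1$ already suffices), and you correctly identify that centrality must be exploited at the level of the generators $g_i-1$ rather than the non-commuting summands $r_i(g_i-1)$. Your version buys a self-contained nilpotence argument for $\Delta(RG)$ and dispenses with Lemma~\ref{R is GNC} and the nilness of $J(R)$; the paper's version is shorter because ``central nilpotent implies Jacobson-radical'' is a one-line observation that sidesteps the combinatorics. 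The only point worth flagging is that you quote \cite[Proposition~16(i)]{con} for a possibly noncommutative $R$, whereas the paper's Lemma~\ref{comm} invokes it only in the commutative case; this is consistent with how the paper itself uses \cite[Proposition~16]{con} for arbitrary GNC rings in Lemma~\ref{rg}, but if one wished to avoid the issue entirely, your nil ideal $\Delta(RG)$ lies in $J(RG)$, so the paper's closing step via \cite[Proposition~15(i)]{con} together with Lemmas~\ref{R is GNC} and~\ref{J(R) nil} is also available to you.
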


\begin{proof}
Let us assume $1 \neq g \in G$. Then, $1 - g \in \Delta(RG)$, which implies that $1 - g$ is not invertible. Therefore, there exist $e = e^2 \in RG$ and $q \in \text{Nil}(RG)$ such that $1 - g = e + q$. Since $G$ is abelian, we have $1 - e = g(1 - g^{-1}q) \in \text{Id}(RG) \cap \text{U}(RG)$, which guarantees that $e = 0$. Thus, $1 - g = q \in \text{Nil}(RG)$.

Besides, as $G$ is abelian, we write $1 - g \in Z(RG)$, and so $1 - g \in J(RG)$. For any $\sum a_gg \in \Delta(RG)$, we now have $\sum a_gg = \sum a_gg - \sum a_g = \sum a_g(1 - g)$. Consequently, $\Delta(RG) \subseteq J(RG)$. But, invoking \cite[Proposition 15(i)]{con}, we conclude that $G$ is a $p$-group, where $p \in J(R)$. On the other hand, from Lemma \ref{R is GNC}, we know that $R$ is a GNC ring, so that $J(R)$ is nil. Hence, $p \in J(R) \subseteq \text{Nil}(R)$, as promised.
\end{proof}

\medskip
\medskip

\noindent {\bf Funding:} The work of the first-named author, P.V. Danchev, is partially supported by the Junta de Andaluc\'ia, Grant FQM 264.

\vskip4.0pc


\begin{thebibliography}{99}

\bibitem{BCDM}
S. Breaz, G. C\v{a}lug\v{a}reanu, P. Danchev and T. Micu, Nil-clean matrix rings, \textit{Linear Algebra Appl.} {\bf 439} (2013), 3115--3119.

\bibitem{CUU}
G. C\v{a}lug\v{a}reanu, UU rings, \textit{Carpathian J. Math.} {\bf 31} (2015), 157--163.

\bibitem{chenlin}
W. Chen, On linearly weak Armendariz rings, \textit{J. Pure Appl. Algebra} {\bf 219}(4) (2015), 1122--1130.

\bibitem{chenshm}
H. Chen and M. Sheibani, On strongly nil clean rings, \textit{Commun. Algebra} {\bf 45}(4) (2017), 1719--1726.

\bibitem{chenshib}
H. Chen and M. Sheibani, Strongly $2$-nil-clean rings, \textit{J. Algebra Appl.} {\bf 16} (2017).

\bibitem{con}
I.G. Connell, On the group ring, \textit{Can. J. Math.} {\bf 15} (1963), 650--685.

\bibitem{CDJ}
J. Cui, P. Danchev and D. Jin, Rings whose nil-clean and clean elements are uniquely nilclean, \textit{Publ. Math. Debrecen} {\bf 105}(3-4) (2024).

\bibitem{Diran}
P. Danchev, Strongly nil-clean corner rings, \textit{Bull. Iran. Math. Soc.} {\bf 43}(5) (2017), 1333--1339.

\bibitem{DL}
P.V. Danchev and T.Y. Lam, Rings with unipotent units, \textit{Publ. Math. Debrecen} {\bf 88}(3-4) (2016), 449--466.

\bibitem{DHM}
P. Danchev, O. Hasanzadeh and A. Moussavi, Rings whose non-invertible elements are uniquely strongly clean, \textit{submitted}.

\bibitem{DHJM}
P. Danchev, O. Hasanzadeh, A. Javan and A. Moussavi, Rings whose non-invertible elements are strongly nil-clean, \textit{submitted}.

\bibitem{diesl}
A.J. Diesl, Nil clean rings, \textit{J. Algebra} {\bf 383} (2013), 197--211.

\bibitem{hannic}
J. Han and W.K. Nicholson, Extension of clean rings, \textit{Commun. Algebra} {\bf 29}(6) (2001), 2589--2595.

\bibitem{kanwar}
P. Kanwar, A. Leroy and J. Matczuk, Idempotents in ring extensions, \textit{J. Algebra} {\bf 389} (2013), 128--136.

\bibitem{42}
P. Kanwar, A. Leroy and J. Matczuk, Clean elements in polynomial rings, \textit{Contemp. Math.} {\bf 634} (2015), 197--204.

\bibitem{K-M}
A. Karimi-Mansoub, T. Kosan and Y. Zhou, {\it Rings in which every unit is a sum of a nilpotent and an idempotent}, Contemp. Math. {\bf 715} (2018), 189--203.

\bibitem{kosan}
M.T. Ko\c{s}an, T. K. Lee and Y. Zhou, When is every matrix over a division ring a sum of an idempotent and a nilpotent?, \textit{Linear Alg. Appl.} {\bf 450} (2014), 7--12.

\bibitem{kosan1}
M.T. Ko\c{s}an, Z. Wang and Y. Zhou, Nil-clean and strongly nil-clean rings, \textit{J. Pure Appl. Algebra} {\bf 220} (2) (2016), 633--646.

\bibitem{kry}
P.A. Krylov, Isomorphism of generalized matrix rings, \textit{Algebra and Logic} {\bf 47} (2008) 258--262.

\bibitem{lam fi}
T. Y. Lam, A First Course in Noncommutative Rings, Second Edition, Graduate Texts in Math., Vol. {\bf 131}, Springer-Verlag, Berlin-Heidelberg-New York, 2001.

\bibitem{sharp}
W. Wm. McGovern, S. Raja, A. Sharp, Commutative nil clean group rings, \textit{J. Algebra Appl.} {\bf 14}(6) (2015).

\bibitem{nic}
W.K. Nicholson, Lifting idempotents and exchange rings, \textit{Trans. Am. Math. Soc.} {\bf 229} (1977), 269--278.

\bibitem{niclocal}
W. K. Nicholson, Local group rings, \textit{Canad. Math. Bull.} {\bf 15} (1972) 137--138.

\bibitem{ster}
J. $\hat{S}$ter, Rings in which nilpotents form a subring, \textit{Carpathian J. Math.} {\bf 32} (2) (2016), 251--258.

\bibitem{tangac}
G. Tang and Y. Zhou, A class of formal matrix rings, \textit{Linear Algebra Appl.} {\bf 438} (2013), 4672--4688.

\bibitem{tangs}
G. Tang, C. Li, Y. Zhou, Study of Morita contexts, \textit{Commun. Algebra} {\bf 42}(4) (2014) 1668--1681.

\end{thebibliography}
\end{document}